\newtheorem{mtheorem}{Theorem}
\newtheorem{theorem}{Theorem}[section]
\newtheorem{lemma}[theorem]{Lemma}
\newtheorem{proposition}[theorem]{Proposition}
\theoremstyle{definition}
\newtheorem{definition}[theorem]{Definition}
\newtheorem{remark}[theorem]{Remark}
\newtheorem{question}{Question}
\numberwithin{equation}{section}
\numberwithin{figure}{section}
\numberwithin{table}{section}
\let\oldmarginpar\marginpar
\renewcommand{\marginpar}[1]{%
\oldmarginpar{\color{red}\footnotesize%
\bfseries\sffamily #1}%
}
\begin{document}

\title[Moduli of diffeomorphisms with cubic tangencies]{
Moduli of surface diffeomorphisms with cubic tangencies}

\author{Shinobu Hashimoto} 
\address{Department of Mathematics and Information Sciences,
Tokyo Metropolitan University,
Minami-Ohsawa 1-1, Hachioji, Tokyo 192-0397, JAPAN}
\email{hashimoto-shinobu@ed.tmu.ac.jp }

\subjclass[2010]{Primary: 37C15, 37C70, 37E30}
\keywords{diffeomorphism, moduli, cubic tangency}

\date{\today}

\begin{abstract}
In this paper, we study conjugacy invariants for 2-dimensional diffeomorphisms with 
homoclinic cubic tangencies (two-sided tangencies of the lowest order) under certain open conditions.
Ordinary arguments used in past studies of conjugacy invariants associated with one-sided tangencies 
do not work in the two-sided case.
We present a new method which is applicable to the two-sided case.
\end{abstract}

\maketitle

\section{Introduction}\label{Intro}

Let $M$ be a closed surface and $\mathrm{Diff}^r(M)$ the space of 
$C^r$-diffeomorphisms with $C^r$-topology.
Suppose that $f_i$ $(i=0,1)$ are elements of $\mathrm{Diff}^2(M)$ with two saddle fixed points $p_i$, $q_i$.
We consider the case where $W^u(p_i)$ and $W^s(q_i)$ have a quadratic heteroclinic tangency $r_i$ 
and there exists a homeomorphism $h:M\to M$ with $f_1=h\circ f_0\circ h^{-1}$,  
$h(p_0)=p_1$, $h(q_0)=q_1$ and $h(r_0)=r_1$.
Then, under some moderate conditions, Palis \cite{pa} proved that
$\dfrac{\log |\lambda_0|}{\log |\mu_0|}= \dfrac{\log |\lambda_1|}{\log |\mu_1|}$, 
where $\lambda_i$ is the contracting eigenvalue of $Df(p_i)$ and 
$\mu_i$ is the expanding eigenvalue of $Df(q_i)$.
Such topological conjugacy invariants are often called \emph{moduli}.
Following his result, de Melo \cite{dm} studied moduli of stability of two-dimensional diffeomorphisms 
$f$, that is, 
a minimal set of moduli which parametrize topological conjugacy classes of a neighborhood of 
$f$ in $\mathrm{Diff}^r(M)$.
He detected moduli of stability for some classes of two-dimensional diffeomorphisms.
In \cite{dm}, he also showed that the restriction of the conjugacy homeomorphism $h$ on  
$W^u(p_0)\setminus \{p_0\}$ is a local diffeomorphism if $\dfrac{\log |\lambda_0|}{\log |\mu_0|}$ is irrational.
Subsequently, Posthumus \cite{po} proved that the homoclinic version of the results due to Palis and de Melo.
In fact, he proved that, if $f_i$ $(i=0,1)$ has a saddle fixed point $p_i$ with a homoclinic quadratic tangency $r_i$, 
then $\dfrac{\log |\lambda_0|}{\log |\mu_0|}= \dfrac{\log |\lambda_1|}{\log |\mu_1|}$ holds, 
where $\lambda_i,\mu_i$ are the contracting and expanding eigenvalues of $Df(p_i)$.
Moreover, if $\dfrac{\log |\lambda_0|}{\log |\mu_0|}$ is irrational, then the eigenvalues are also moduli, 
that is, $\lambda_0=\lambda_1$ and $\mu_0=\mu_1$.
Various results related to moduli concerning eigenvalues are obtained by some authors; see for example \cite{dmp, dmvs,pot,gpvs}.
However, in all of these results, the assumption that the tangency is quadratic or one-sided is crucial.
In fact, some of their arguments do not work in the case that $q$ is a two-sided tangency, 
see Remark \ref{r_one_sided} for the reason.

Here we consider surface diffeomorphisms of $C^3$-class with cubic homoclinic tangencies, which are 
two-sided tangencies of lowest order.
Bifurcations of such a tangency is one of typical subjects in 2-dimensional dynamics, 
see for example \cite{kky, ks1}.
Moreover, Kiriki and Soma \cite{ks2} presented infinitely many (original) H\'enon maps 
which have homoclinic cubic tangencies.
In this paper, we will prove some results for surface diffeomorphisms with cubic homoclinic tangencies 
corresponding to those 
for diffeomorphisms with one-sided homoclinic tangencies in \cite{pa,dm,pot} .
As far as the author knows, this paper is the first attempt to study moduli 
with respect to two-sided homoclinic tangencies. 
Our argument works if the expanding eigenvalue of $Df(p)$ is  
sufficiently small, where $p$ is a saddle fixed point with a homoclinic cubic tangency.


\begin{mtheorem}\label{thm_A}
Suppose that $M$ is a closed surface with Riemannian metric.
Let $f_i$ $(i=0,1)$ be elements of $\mathrm{Diff}^3(M)$ each of which has 
a saddle fixed point $p_i$ and a homoclinic cubic tangency $q_i$ associated with $p_i$ 
and satisfies the following conditions.
\begin{enumerate}
\renewcommand{\theenumi}{A\arabic{enumi}}
\renewcommand{\labelenumi}{\rm(\theenumi)}
\item\label{A1}
For $i=0,1$, there exists a neighborhood $U(p_i)$ of $p_i$ in $M$ such that $f|_{U(p_i)}$ is linear. 
\item\label{A2}  
$f_0$ is topologically conjugate to $f_1$ by a homeomorphism $h:M\to M$ with $h(p_0)=p_1$ 
and $h(q_0)=q_1$.
\item\label{A3}  
Each $f_i$ $(i=0,1)$ satisfy the small expanding condition and one of the 
adaptable conditions with respect to $(p_i,q_i)$ in Section \ref{S_adaptable}.
\end{enumerate}
Then \eqref{M1} and $\eqref{M2}$ hold, 
where $\lambda_i$, $\mu_i$ are the eigenvalues of $Df_0(p_i)$ 
with $0<|\lambda_i|<1<|\mu_i|$.
\begin{enumerate}
\renewcommand{\theenumi}{M\arabic{enumi}}
\renewcommand{\labelenumi}{\rm(\theenumi)}
\item\label{M1}
$\dfrac{\log |\lambda_0|}{\log |\mu_0|}=\dfrac{\log |\lambda_1|}{\log |\mu_1|}$.
\item\label{M2}
Moreover, if $\dfrac{\log |\lambda_0|}{\log |\mu_0|}$ is irrational, then $\mu_0=\mu_1$ and $\lambda_0=\lambda_1$.
\end{enumerate}
\end{mtheorem}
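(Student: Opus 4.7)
My plan is to work in the linearizing coordinates $(x, y)$ supplied by hypothesis \eqref{A1}, so that on $U(p_i)$ the map $f_i$ acts as $(x, y) \mapsto (\lambda_i x, \mu_i y)$ with $W^s_{\mathrm{loc}}(p_i) = \{y = 0\}$ and $W^u_{\mathrm{loc}}(p_i) = \{x = 0\}$. After replacing $q_i$ by a suitable forward iterate (legitimate because $h \circ f_0^n = f_1^n \circ h$ forces $h(f_0^n(q_0)) = f_1^n(q_1)$), I may assume $q_i = (c_i, 0) \in W^s_{\mathrm{loc}}(p_i)$ with $c_i \neq 0$. The cubic tangency condition then forces the branch of $W^u(p_i)$ through $q_i$ to be locally of the form
\begin{equation*}
y = a_i (x - c_i)^3 + O\bigl((x - c_i)^4\bigr), \qquad a_i \neq 0,
\end{equation*}
with the odd exponent encoding the two-sidedness. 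Pushing this branch forward by $f_i^n$, the cubic tangency at $r_i^{(n)} := f_i^n(q_i) = (\lambda_i^n c_i, 0)$ is described by $y = A_i^{(n)} (x - \lambda_i^n c_i)^3 + \cdots$ with coefficient $A_i^{(n)} = a_i \mu_i^n \lambda_i^{-3n}$; this sequence of cubic tangencies arrayed along $W^s_{\mathrm{loc}}(p_i)$ is the geometric object on which everything will rest.

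For \eqref{M1}, I would exploit the fact that $h$ must carry $r_0^{(n)}$ to $r_1^{(n)}$ for every $n$ and send $W^s(p_0) \cup W^u(p_0)$ onto the corresponding set for $f_1$. Two exponential scales are attached to $r_i^{(n)}$: the horizontal offset $|\lambda_i|^n |c_i|$ from $p_i$, and the vertical opening coefficient $|a_i|\,|\mu_i|^n |\lambda_i|^{-3n}$. The adaptable condition in \eqref{A3} should furnish a fixed auxiliary transversal against which the crossings of each cubic branch yield a combinatorial pattern invariant under $h$, while the small-expanding part of \eqref{A3} is what permits the $O(\cdot^4)$ remainders to be absorbed uniformly in $n$. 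Matching the two scales through $h$ produces a pair of linear identities among $\log|\lambda_i|$ and $\log|\mu_i|$ whose simultaneous consistency is exactly the equality $\log|\lambda_0|/\log|\mu_0| = \log|\lambda_1|/\log|\mu_1|$.

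For \eqref{M2}, the irrationality of this common ratio implies by Kronecker that $\{n\log|\mu_i|\}$ is equidistributed modulo $|\log|\lambda_i||$, so the conjugacy cannot be fulfilled merely by matching asymptotic rates: the match has to hold at a dense family of individual scales. A standard density bootstrap, together with the continuity of $h$ on the union of invariant manifolds, then promotes the equality of ratios to the individual equalities $\lambda_0 = \lambda_1$ and $\mu_0 = \mu_1$.

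The hard part, flagged in Remark~\ref{r_one_sided}, is the two-sidedness itself: in the quadratic setting of Palis, de Melo, and Posthumus the modulus is extracted from a one-sided fold preserved by any conjugacy, but here $W^u$ genuinely crosses $W^s$ at $q_i$ and no such distinguished side exists. Replacing the fold invariant by the symmetric crossing pattern of the sequence $\{r_i^{(n)}\}$ against a transversal comes at a cost: the higher-order remainders in the local expansion of $W^u$ now contribute to both sides symmetrically, and keeping them from corrupting the asymptotic comparison is precisely what forces both the small-expanding and adaptable parts of \eqref{A3} into play. Designing the transversal and bounding those remainders uniformly in $n$ is, I expect, where the real technical work lives.
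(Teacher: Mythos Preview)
Your plan has a genuine gap at its core. The objects you propose to use---the forward iterates $r_i^{(n)}=f_i^n(q_i)$ of the tangency together with the cubic coefficients $A_i^{(n)}=a_i\mu_i^n\lambda_i^{-3n}$---are smooth invariants, and the conjugacy $h$ is only a homeomorphism. A homeomorphism can distort the ``vertical opening coefficient'' of a cubic arc arbitrarily, so there is no mechanism by which ``matching the two scales through $h$'' yields a relation among the eigenvalues. Likewise, a single cubic branch meets any fixed transversal in exactly one point near the tangency, so there is no combinatorial crossing pattern to read off either; your proposed ``symmetric crossing pattern of the sequence $\{r_i^{(n)}\}$ against a transversal'' does not exist as stated. (There is also a minor regularity issue: the maps are only $C^3$, so the remainder after the cubic term is $o((x-c_i)^3)$, not $O((x-c_i)^4)$.)

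The paper's route is genuinely different from yours. It does not iterate the tangency; instead it takes auxiliary arcs $\alpha_n^u\subset W^u(p)$ furnished by the Inclination Lemma from a \emph{transverse} homoclinic point, and pushes them through the transition map $\varphi=f^{m_0}$ to produce S-shaped sub-arcs $\gamma_{0,n}'$ framed by rectangles $S_n$. These S-shapes contain three nearly-horizontal strands, and that three-fold structure \emph{is} a topological datum. The heart of the argument is the Intersection Lemma: one must show $h(\gamma_{0,n}')$ actually meets the corresponding S-shape $\overline\gamma_{0,n}'$ for $f_1$. Since the S-shapes shrink to zero size, this is not automatic; the paper proves it by iterating $f^{u_k}\circ\varphi$ to amplify the S-shape through a chain of rectangle-like boxes $B_k$ until the three strands are separated at a macroscopic scale that $h$ must respect. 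The small-expanding and adaptable conditions in \eqref{A3} are used precisely here, to control slopes (Slope Lemmas) and guarantee the boxes grow in width while shrinking in height. Once the Intersection Lemma supplies points $r_n\in\gamma_{0,n}'$ with $h(r_n)\in\overline\gamma_{0,n}'$, the first coordinate satisfies $\mathrm{pr}_x(r_n)\approx az_0\lambda^n$ (and similarly for $f_1$), and a standard limiting argument along a subsequence $f^{m(k)}(r_{n(k)})\to x_0$ extracts $\lim m(k)/n(k)=-\log\lambda/\log\mu$ for both systems, proving \eqref{M1}. Part \eqref{M2} then goes through a lemma showing $h|_{W^u_+(p)}$ is locally $C^1$ when the ratio is irrational, after which a comparison of distances along $W^u$ near $q$ gives $\mu_0=\mu_1$.

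In short, the missing idea is the S-shaped curve coming from an auxiliary transverse homoclinic arc, and the missing lemma is the Intersection Lemma together with its amplification proof; without these, the purely scale-matching strategy you outline cannot survive composition with a non-smooth $h$.
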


Here we say that $f_0$ satisfies the \emph{small expanding condition} at $p_0$ if $|\mu_0|=
1+\varepsilon$ with $0<\varepsilon<\varepsilon_0$ for the constant $\varepsilon_0$ given in Lemma \ref{l_slow3'}.
Note that this condition depends on local expressions of $f_0$ such as \eqref{eqn_linear} near $p_0$ and \eqref{eqn_varphi} near $f_0^{m_0}(q_0)$.
In Section \ref{S_pre}, we present a codimension two submanifold $\mathcal{C}$ of 
$\mathrm{Diff}^3(M)$ such that any 
 element of $\mathcal{C}$ sufficiently close to $f_0$  
also satisfies \eqref{A3}.
In the case that $f$ is of class $C^\infty$,  we know from Sternberg \cite{st} and Takens \cite{ta} 
that \eqref{A1} 
is an open dense condition in $\mathrm{Diff}^\infty(M)$. 

Though we only consider the case of cubic tangencies,
we believe that our method still works in the case of two-sided tangencies of higher order.
So we propose the following question.

\begin{question}
Is it possible to generalize our theorems to the case that diffeomorphisms have 
two-sided homoclinic tangencies of higher order\,?
\end{question}

We will finish the introduction by outlining the proof of the main theorem.

Let $f_0$ be a diffeomorphism satisfying the conditions of Theorem \ref{thm_A}.
We may assume that $q_0$ and $r_0=\varphi(q_0)$ are contained in $W_{\mathrm{loc}}^u(p_0)$ and 
$W_{\mathrm{loc}}^s(p_0)$ respectively, where $\varphi=f_0^{m_0}$ for some positive integer $m_0$.
For the proof of Theorem \ref{thm_A}, we need to find out a useful connection between the 
eigenvalues $\mu_i$ and $\lambda_i$ for $i=0,1$.
By applying Inclination Lemma, we have a sequence $\{\alpha_n^u\}$ of arcs in $W^u(p_0)$ 
which meet $W_{\mathrm{loc}}^s(p_0)$ transversely at single points $z_0\lambda_0^n$ and 
$C^3$-converge to a sub-arc of $W_{\mathrm{loc}}^u(p_0)$, see Figure \ref{fig_alpha_n}.
Then $\varphi(\alpha_n^u)$ contains an S-shaped arc $\gamma_{0,n}'$ framed by the rectangle $S_n$ as illustrated in 
Figure \ref{fig_outline}.

\begin{figure}[hbt]
\centering
\scalebox{0.3}{\includegraphics[clip]{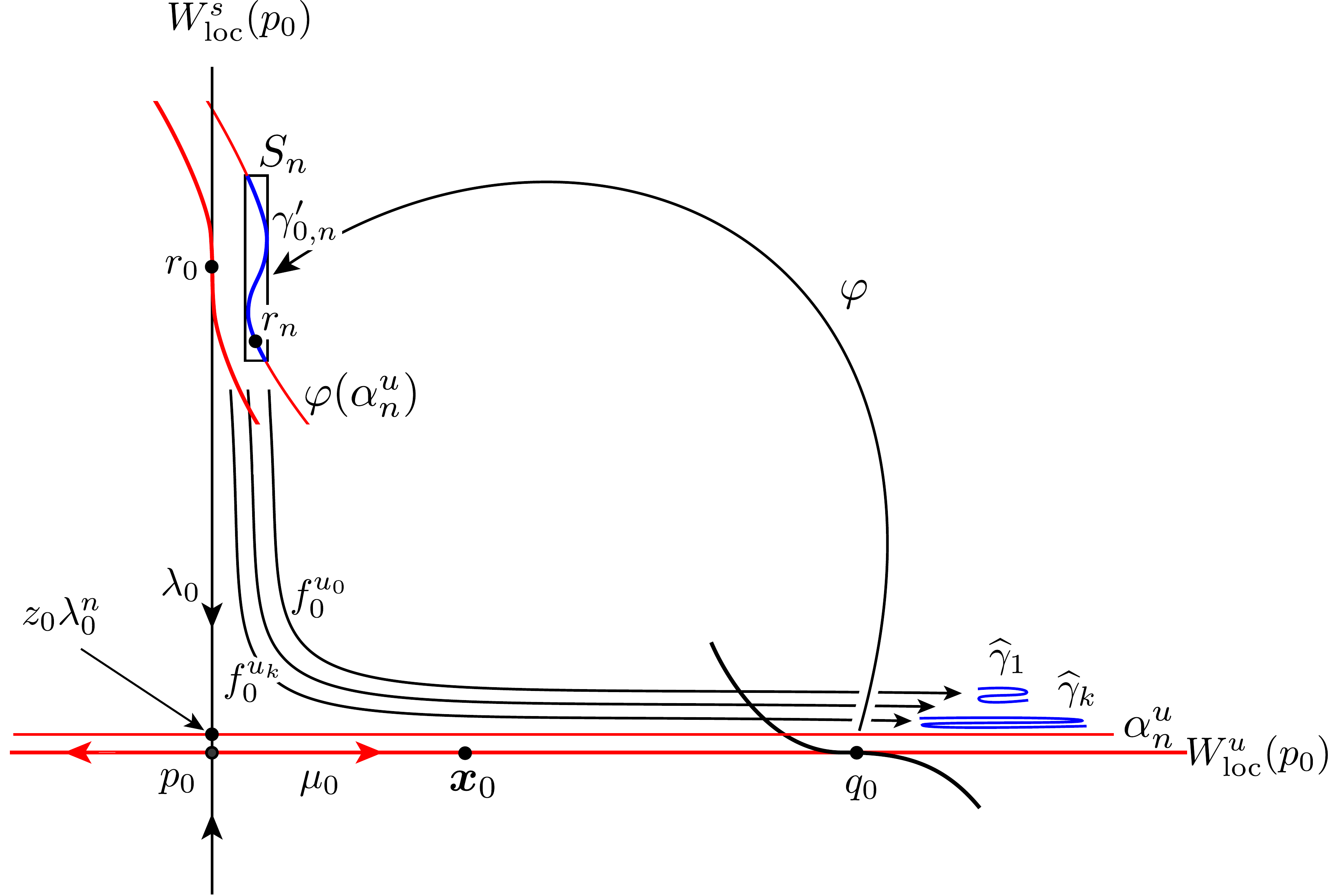}}
\caption{}
\label{fig_outline}
\end{figure}

We note that such arcs $\gamma_{0,n}'$ are subtle and vanish eventually as $n\to \infty$, 
see Figures \ref{fig_Sn} and \ref{fig_rn}.
Since $h$ is not supposed to be smooth, one can not expect that $h$ sends $\gamma_n'$ to an S-shaped curve in $W^u(p_1)$.
However Intersection Lemma (Lemma \ref{l_intersection}) shows that it actually holds, 
which is a key lemma in our argument.
For the proof, we send $\gamma_{0,n}'$ to a curve $\widehat\gamma_1$ in a small neighborhood of $q_0$ by $f_0^{u_0}$ for some $u_0\in \mathbb{N}$ and pull it back near $r_0$ by $\varphi$.
Repeating this process many times, one can amplify $\widehat\gamma_1$ and finally have a compressed S-shaped curve  $\widehat\gamma_k$ near $q$ the diameter of which is substantial so that it can be distinguished by $h$.
From this fact, we know that $h(\widehat\gamma_k)$ intersects a compressed S-shaped curve $\widehat\gamma_k^*$ in $W^u(p_1)$.
It follows that there exists a sequence $\{r_n\}$ with $r_n\in \gamma_{0,n}'$ as illustrated in 
Figure \ref{fig_outline} such that 
$\bar r_n=h(r_n)$ is contained in the corresponding S-shaped curve $\bar\gamma_{0,n}'$ in $W^u(p_1)$.
We note that the images of $r_n$, $\bar r_n$ by the orthogonal projections to the first coordinates 
are represented as $az_0\lambda_0^n+o(\lambda_0^n)$, $\bar a\bar z_0\lambda_1^n+o(\lambda_1^n)$ respectively 
for some non-zero constants $a$, $\bar a$.
One can take subsequences $\{n(k)\}$, $\{m(k)\}$ of $\mathbb{N}$ such that $f_0^{m(k)}(r_{n(k)})$ 
converges to a point $x_0\in W_{\mathrm{loc}}^u(p_0)$.
Then $f_1^{m(k)}(\bar r_{n(k)})$ 
also converges to $h(x_0)\in W_{\mathrm{loc}}^u(p_1)$.
By using this fact, we will show that   
$\lim_{k\to \infty}\dfrac{m(k)}{n(k)}=-\dfrac{\log \lambda_0}{\log \mu_0}$ and 
$\lim_{k\to \infty}\dfrac{m(k)}{n(k)}=-\dfrac{\log \lambda_1}{\log \mu_1}$.
This proves the assertion \eqref{M1}.
The assertion \eqref{M2} is proved by \eqref{M1} together with standard arguments in \cite{dm,po}.

\section{Preliminaries}\label{S_pre}

Let $\{a_n\}$, $\{b_n\}$ be sequences with non-zero entries.
Then $a_n\approx b_n$ means that $\dfrac{a_n}{b_n}\to 1$ as $n\to \infty$, 
and $a_n\sim b_n$ means that there exist constants $C$ and $C'$ independent of $n$ with $0<C'<1<C$ 
and satisfying $C' \leq \dfrac{a_n}{b_n}\leq C$ for any $n$.
Suppose next that $\{a_n\}$, $\{b_n\}$ are sequences with non-negative entries.
If there exists a constant $C'>0$ independent of $n$ and satisfying 
$a_n\leq C' b_n$ for any $n$, then we denote the property by $a_n\precsim b_n$.

Throughout the remainder of this paper, we suppose that $M$ is a closed connected surface and $f:M\to M$ 
is a $C^3$-diffeomorphism with a saddle fixed point $p$.
Let $\mu$, $\lambda$ be the eigenvalues of $Df(p)$ with 
\begin{equation}\label{eqn_mulambda}
0<|\lambda|<1<|\mu|.
\end{equation}
Suppose moreover that $f$ is $C^3$-linearizable in a neighborhood $U(p)$ of 
$p$ in $M$.
Then there exists a $C^3$-coordinate $(x,y)$ on $U(p)$ satisfying the following condition: 
\begin{equation}\label{eqn_linear}
f(x,y)=(\mu x,\lambda y)
\end{equation}
for any $(x,y)\in U(p)$.
In particular, this implies that $p=(0,0)$, $W_{\mathrm{loc}}^u(p):=\{(x,y)\in U(p);\,y=0\}\subset W^u(p)$ and  
$W_{\mathrm{loc}}^s(p):=\{(x,y)\in U(p);\,x=0\}\subset W^s(p)$.

A non-transverse intersection point $q$ of $W^u(p)$ and $W^s(p)$ is called a \emph{homoclinic tangency} associated 
with $p$.
We fix a Riemannian metric on $M$ and denote the distance on $M$ induced from the metric by $d$. 
The tangency is of \emph{order} $n$ if the limit
$$\lim_{\begin{subarray}{l} w\in W_{\mathrm{loc}}^s(p),\\[2pt] w\to q\end{subarray}}\frac{d(w,W^u(p))}{[d(w,q)]^n}
$$
exists and has non-zero value.
See \cite[Section 2]{po} for the definition.
If $n=2$ (resp.\ $n=3$), then the tangency $q$ is called  \emph{quadratic} (resp.\ \emph{cubic}).
Let $\mathcal{C}$ be the subspace of $\mathrm{Diff}^3(M)$ consisting of elements $f\in \mathrm{Diff}^3(M)$ satisfying the following conditions \eqref{C1}--\eqref{C3}.
\begin{enumerate}
\renewcommand{\theenumi}{C\arabic{enumi}}
\item\label{C1}
$f$ has a saddle periodic point $p$.
\item\label{C2}
There exists a homoclinic cubic tangency $q$ associated with $p$.
\item\label{C3}
$f$ satisfies the adaptable conditions in the sense of Section \ref{S_adaptable} with respect to $p,q$.
\end{enumerate}
Note that $\mathcal{C}$ is a codimension two submanifold of $\mathrm{Diff}^3(M)$.

Let $q$ be a cubic tangency of $W^u(p)$ and $W^s(p)$.
We assume that $q$ is contained in $W_{\mathrm{loc}}^u(p)  \subset U(p)$ if necessary replacing $q$ by $f^{-n}(q)$ with sufficiently large $n\in\mathbb{N}$.
For the point $q$, there exists $m_0\in \mathbb{N}$ such that $r:=f^{m_0}(q)\in W_{\mathrm{loc}}^s(p) \subset U(p)$.
Then one can rearrange the linearizing coordinate on $U(p)$ so that $q=(1,0)$, $r=(0,1)$.
Moreover, we may suppose that
$$U(p)=[-2,2]\times [-2,2],\ W_{\mathrm{loc}}^u(p)=[-2,2]\times \{0\},\ W_{\mathrm{loc}}^s(p)=\{0\}\times [-2,2].$$
Let $U(q)$, $U(r)$ be sufficiently small neighborhoods of $q$, $r$ in $U(p)$ respectively. 
Then the component $L^s(q)$ of $W^s(p)\cap U(q)$ containing $q$ is represented as 
$$L^s(q)= \{(x+1, y)\in U(q);\, y = v(x)\},$$
where $v$ is a $C^3$-function satisfying
\begin{equation}\label{eqn_vvv}
v(0) = v'(0) =v''(0) = 0\quad\text{and}\quad v'''(0)\neq 0.
\end{equation}
Similarly, the component $L^u(r)$ of $W^u(p)\cap U(r)$ containing $r$ is represented as 
$$L^u(r)= \{(x, y+1)\in U(r);\, x = w(y)\},$$
where $w$ is a $C^3$-function satisfying
\begin{equation}\label{eqn_www}
w(0) = w'(0) =w''(0) = 0\quad\text{and}\quad w'''(0)\neq 0,
\end{equation}
see Figure \ref{fig_vw0}

\begin{figure}[hbt]
\centering
\scalebox{0.3}{\includegraphics[clip]{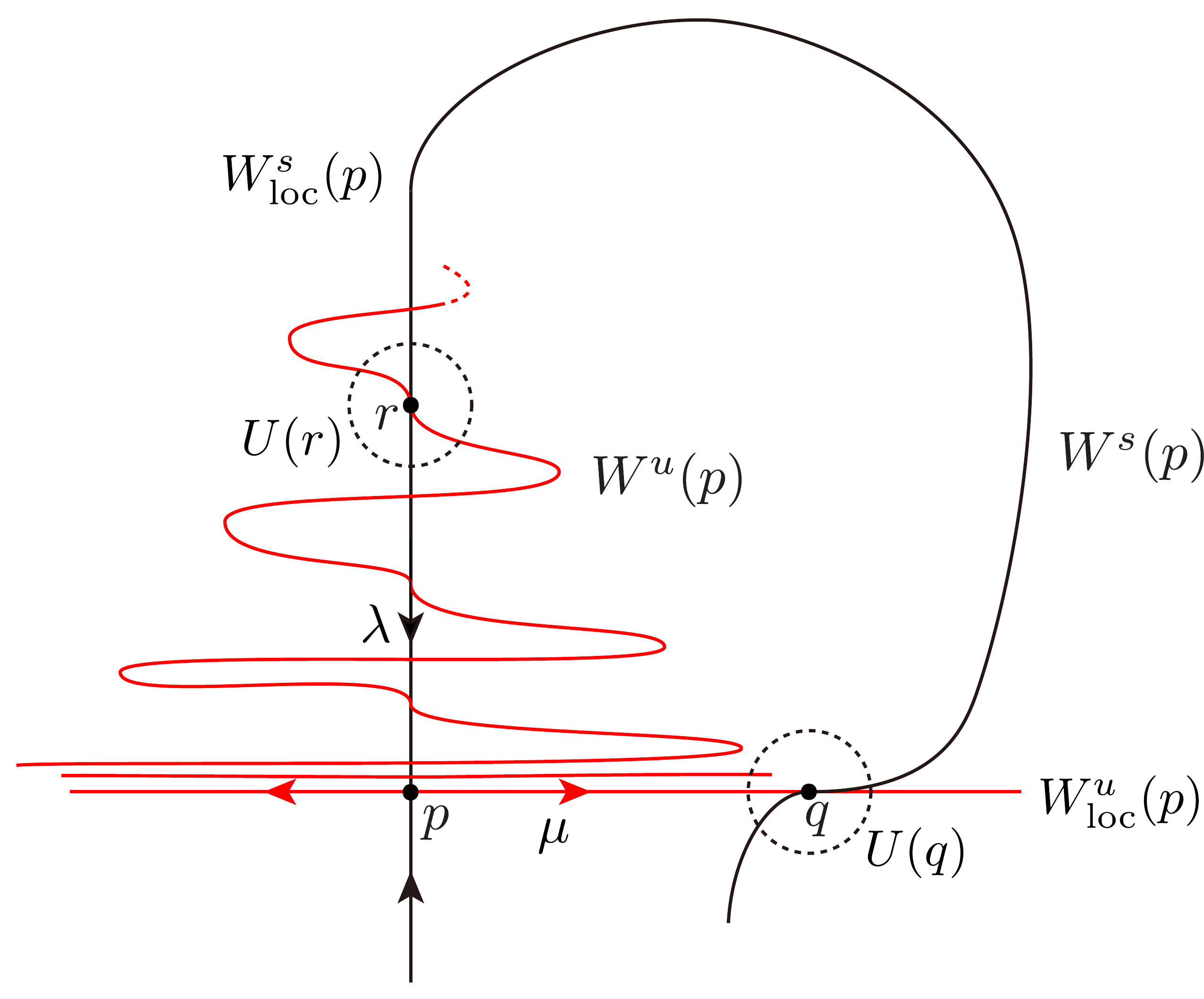}}
\caption{}
\label{fig_vw0}
\end{figure}

Recall that $q=(1,0)$, $r=(0,1)$ are cubic tangencies between $W^s(p)$ and $W^u(p)$ and $f^{m_0}(q)=r$ for some 
$m_0\in\mathbb{N}$.
We set $f^{m_0}=\varphi$ for short.
By \eqref{eqn_vvv} and \eqref{eqn_www}, $\varphi$ is represented in $U(q)$ as follows for some constants $a,b,c,d,e$.
\begin{equation}\label{eqn_varphi}
\varphi(x+1,y)=(ay+bxy+cx^3+H_1(x+1,y), 1+dx+ey+H_2(x+1,y)).
\end{equation}
where $H_1$, $H_2$ are $C^3$-functions 
satisfying the following conditions.
\begin{equation}\label{eqn_H1H2}
\begin{split}
H_1(1,0)&=\partial_x H_1(1,0)=\partial_y H_1(1,0)=\partial_{xx} H_1(1,0)=\partial_{xy} H_1(1,0)\\
&=\partial_{xxx} H_1(1,0)= 0,\\
H_2(1,0)&=\partial_x H_2(1,0)=\partial_y H_2(1,0)=0.
\end{split}
\end{equation}
Since $\varphi$ is a diffeomorphism, 
$$a,d\neq 0.$$
The fact that $q$ is a cubic tangency implies
$$c\neq 0.$$
Here we put the following extra open condition.
\begin{equation}\label{EX1}
b\neq 0.
\end{equation}

By \eqref{eqn_varphi} and \eqref{eqn_H1H2}, the Jacobian matrix of
$\varphi$ at $(x+1,y)$ is given as follows. 
\begin{equation}\label{eqn_dphi}
\begin{split}
d\varphi_{(x+1,y)}&=
\begin{bmatrix} 
\,by+3cx^2+\partial_x H_1(x+1,y)  & a+bx+\partial_y H_1(x+1,y)\,\\ 
d+\partial_x H_2(x+1,y) & e+\partial_y H_2(x+1,y)
\end{bmatrix}\\
&=
\begin{bmatrix} 
\,by+3cx^2+o(x^2)+o(y)+O(xy)& a+bx+o(x)+O(y)\,\\ 
d+O(x)+O(y) & e+O(x)+O(y)
\end{bmatrix}.
\end{split}
\end{equation}
Here we only consider the case satisfying the following condition, 
which belongs to Case $\mathrm{II}_{++}$ in Section \ref{S_adaptable}.
\begin{equation}\label{eqn_adaptable_0}
0<\lambda<1, \mu>1,a>0,b<0,c>0,d<0.
\end{equation}
See Figure \ref{fig_case0} for the situation of $W_{\mathrm{loc}}^u(p)$ and $W_{\mathrm{loc}}^s(p)$ 
in the case of \eqref{eqn_adaptable_0}.
Note that \eqref{eqn_adaptable_0} implies the extra condition \eqref{EX1}.
\begin{figure}[hbt]
\centering
\scalebox{0.3}{\includegraphics[clip]{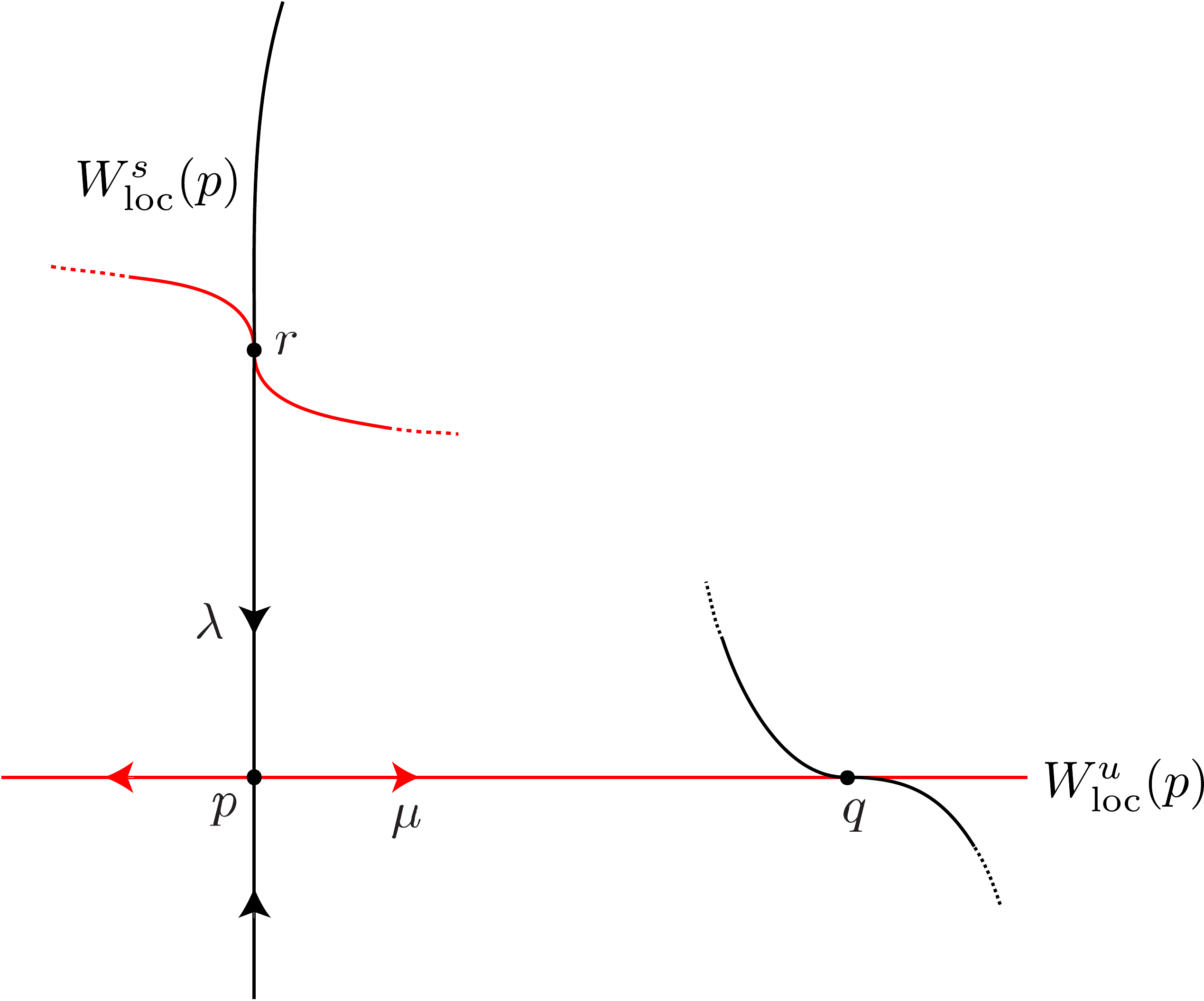}}
\caption{}
\label{fig_case0}
\end{figure}

One can set $\mu=1+\varepsilon$ for some $\varepsilon>0$.
We only consider the case that $\varepsilon$ is sufficiently small.

Consider the rectangle $R_\varepsilon=[1+\varepsilon,(1+\varepsilon)^3]\times [0,\varepsilon^3]$ in $U(q)$.
By \eqref{eqn_varphi}, 
\begin{equation}\label{eqn_vp(1+e)}
\begin{split}
\varphi(1+\varepsilon,0)&= (c\varepsilon^3+o(\varepsilon^3),1+d\varepsilon+o(\varepsilon)),\\
\varphi(1+\varepsilon,\varepsilon^3)&= ((a+c)\varepsilon^3+o(\varepsilon^3),1+d\varepsilon+o(\varepsilon)),\\
\varphi((1+\varepsilon)^3,0)&=(27c\varepsilon^3+o(\varepsilon^3), 1+3d\varepsilon+o(\varepsilon)),\\
\varphi((1+\varepsilon)^3,\varepsilon^3)&=((a+27c)\varepsilon^3+o(\varepsilon^3), 1+3d\varepsilon+o(\varepsilon)).
\end{split}
\end{equation}
Let ${\rm pr}_x : U(p) \rightarrow W^u_{\mathrm{loc}}(p)$ and ${\rm pr}_y : U(p) \rightarrow W^s_{\mathrm{loc}}(p)$ 
be the orthogonal projections with respect to the linearizing coordinate on $U(p)$.
Then there exist constants $\tau_0$, $\tau_1$ with $0<\tau_0<\tau_1$ independent of $\varepsilon$ and 
satisfying 
\begin{equation}\label{eqn_prxR}
\mathrm{pr}_x(\varphi(R_\varepsilon))\subset [\tau_0\varepsilon^3,\tau_1\varepsilon^3].
\end{equation}
Since $d<0$ by \eqref{eqn_adaptable_0}, it follows from \eqref{eqn_vp(1+e)} that
\begin{equation}\label{eqn_prxR}
\mathrm{pr}_y(\varphi(R_\varepsilon))\subset [1+3.5d\varepsilon,1+0.5d\varepsilon]\subset [1+4d\varepsilon,1].
\end{equation}

For any $\boldsymbol{x}\in R_\varepsilon$, let $u_0=u_0(\boldsymbol{x})$ 
be a uniquely determined positive integer such that 
$f^i(\varphi(\boldsymbol{x}))\in U(p)$ for $i=1,\dots,u_0$ and $\mathrm{pr}_x(f^{u_0}(\varphi(\boldsymbol{x})))
\subset ((1+\varepsilon)^2,(1+\varepsilon)^3]$.
Since $\mathrm{pr}_x(f^{u_0}(\varphi(\boldsymbol{x})))=\mu^{u_0}\mathrm{pr}_x(\varphi(\boldsymbol{x}))$, 
$$1<(1+\varepsilon)^2 < \mu^{u_0}\mathrm{pr}_x(\varphi(\boldsymbol{x})) < \tau_1\mu^{u_0}\varepsilon^3.$$
Since $\mathrm{pr}_y(\varphi(\boldsymbol{x})) < 1$ by \eqref{eqn_prxR}, 
it follows that 
$$\mathrm{pr}_y(f^{u_0}(\varphi(\boldsymbol{x})))=\lambda^{u_0}\mathrm{pr}_y(\varphi(\boldsymbol{x})) < \lambda^{u_0}.$$
Consider the following conditions for $\varepsilon > 0$: 
\begin{equation}
\tau_1 < \varepsilon^{-1}\quad\text{and}\quad (1+\varepsilon)^{\frac32} = \mu^{\frac32} < \lambda^{-1}.
\label{eqn_slow1}
\end{equation}
If these conditions are satisfied, then the following inequalities 
\begin{equation}\label{eqn_muu0}
1<1+\varepsilon < \mu^{u_0}\mathrm{pr}_x(\varphi(\boldsymbol{x})) < \tau_1\mu^{u_0}\varepsilon^3 < \mu^{u_0}\varepsilon^2
 \end{equation}
hold.
This implies that
$$\mathrm{pr}_y(f^{u_0}(\varphi(\boldsymbol{x})))=\lambda^{u_0}\mathrm{pr}_y(\varphi(\boldsymbol{x})) < \lambda^{u_0} < \mu^{-\frac{3}{2}u_0} < \varepsilon^3.$$
Thus the positive integer $u_0(\boldsymbol{x})$ satisfies
\begin{equation}\label{eqn_Rve}
f^{u_0(\boldsymbol{x})}(\varphi(\boldsymbol{x})) \in R_\varepsilon
\end{equation}
for all $\boldsymbol{x} \in R_\varepsilon$.
See Figure \ref{fig_Re}.

\begin{figure}[hbt]
\centering
\scalebox{0.3}{\includegraphics[clip]{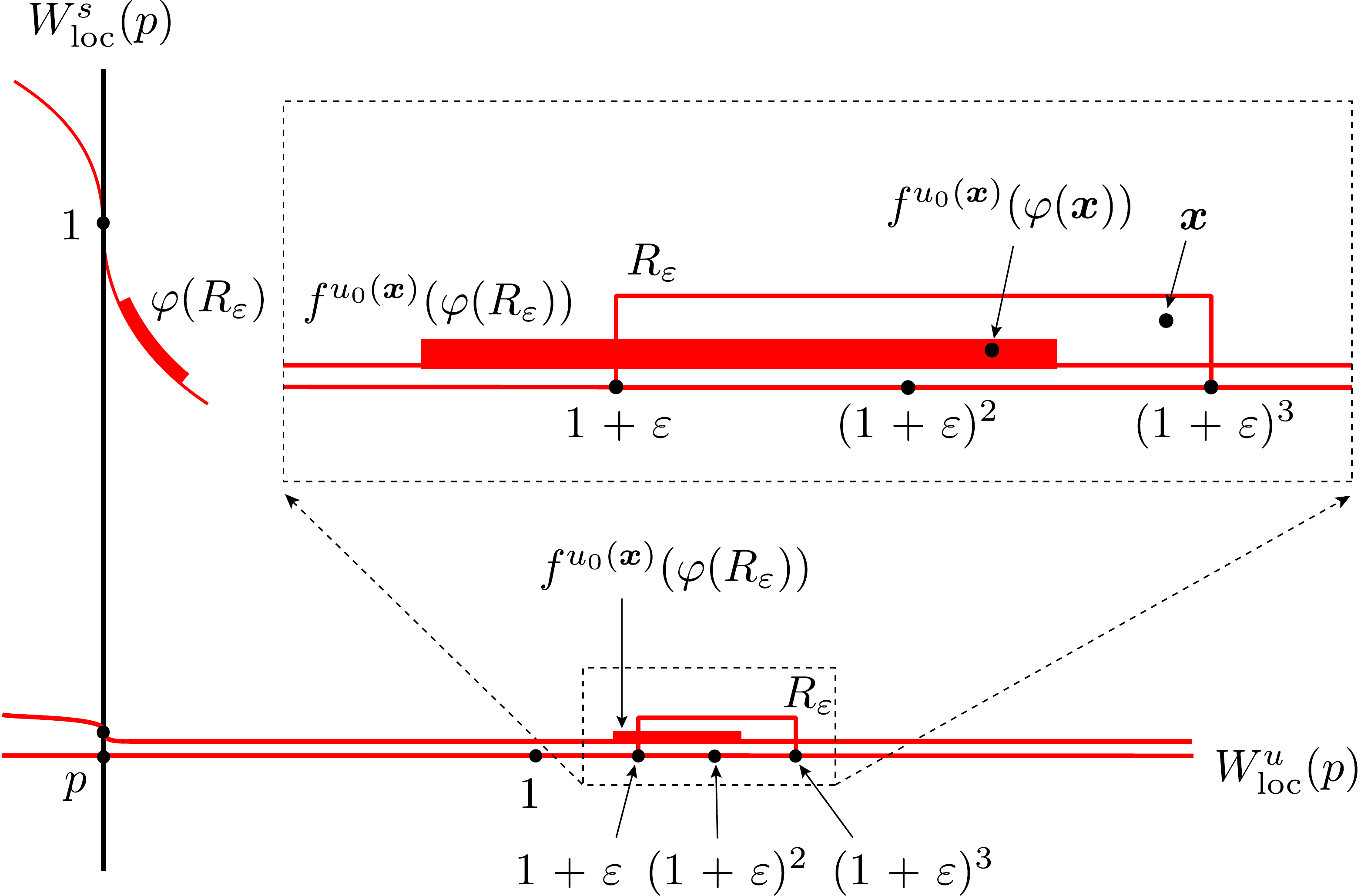}}
\caption{}
\label{fig_Re}
\end{figure}

\section{Sequence of Rectangles}\label{S_Sbox}

Let $f : M \rightarrow M$ be a $C^3$-diffeomorphism given in Section \ref{S_pre}.
In particular, $f$ satisfies the linearizing condition \eqref{eqn_linear} on $U(p)$.
It is not hard to show that $W^u(p)$ and $W^s(p)$ have a transverse intersection point other than $p$.
See, for example, Lemma 1.2 in \cite{ks1}.
Let $\delta^u$ be a segment in $W_{\mathrm{loc}}^u(p)$ with $\mathrm{Int}\delta^u \supset \{p,q\}$.
Then, by Inclination Lemma, 
there exists a sequence $\{\alpha_n^u\}_{n=0}^\infty$ of arcs in $W^u(p)$ 
$C^3$-converging to $\delta^u$ and satisfying the following conditions:
\begin{itemize}
\setlength{\leftskip}{-18pt}
\item
$\alpha_0^u$ meets $W_{\mathrm{loc}}^s(p)$ transversely in a single point $\boldsymbol{z}_0=(0,z_0)$.
\item
Each $\alpha_n^u$ contains $f^n(\boldsymbol{z}_0)=(0,z_0\lambda^n)$, and the intersection $\tilde\alpha_n^u=\alpha_n^u\cap U(q)$ is an arc meeting 
$L^s(q)$ transversely in a single point $c_n$ for any sufficiently large $n>0$.
\end{itemize}
See Figure \ref{fig_alpha_n}.
\begin{figure}[hbt]
\centering
\scalebox{0.3}{\includegraphics[clip]{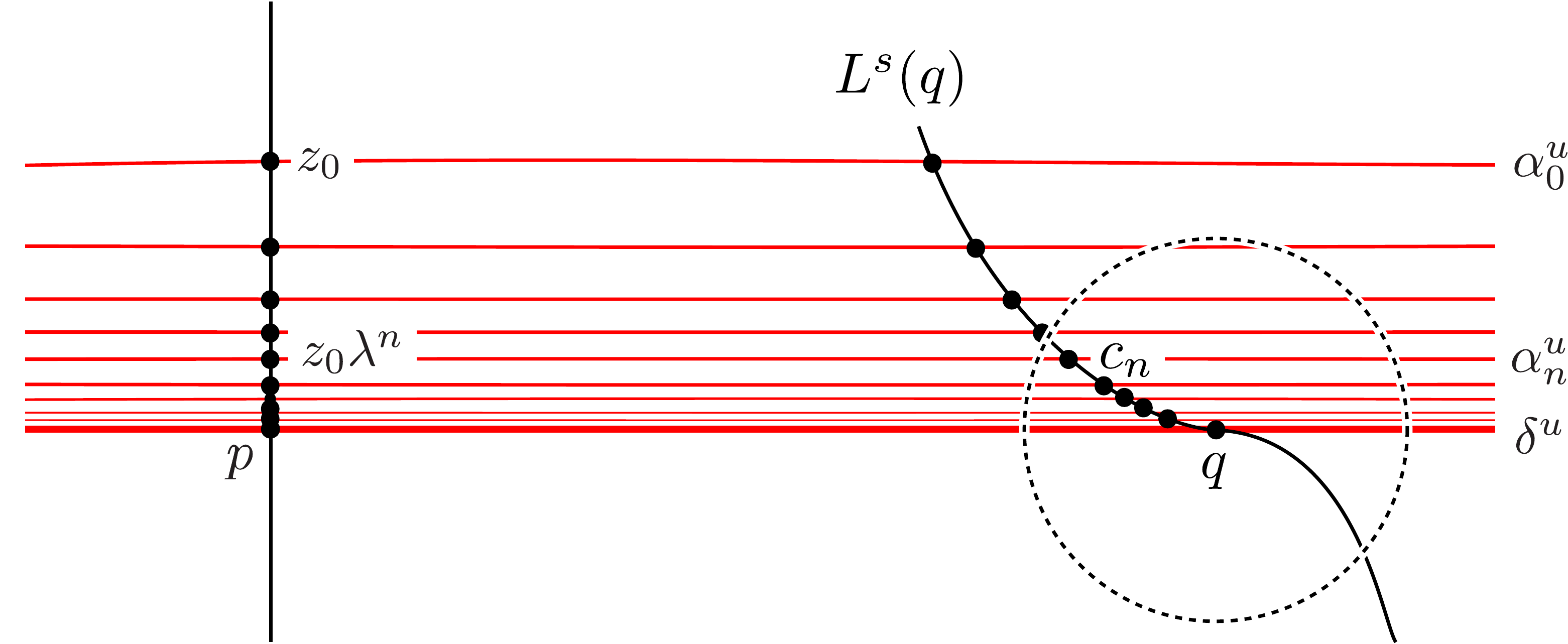}}
\caption{}
\label{fig_alpha_n}
\end{figure}
Note that $\alpha_0^u$ is represented by the graph of a $C^3$-function $y_0:\delta^u\to \mathbb{R}_+$, 
that is, $\alpha_0^u=\{(x,y_0(x))\,;\, x\in \delta^u\}$.  
Then each $\alpha_n^u$ is represented by the graph of the function $y_n:\delta^u\to \mathbb{R}_+$ 
with 
\begin{equation}\label{eqn_psi}
y_n(x)=\lambda^n y_0(\mu^{-n}x)\quad\text{for}\quad x\in \delta^u.
\end{equation}
We parametrise $\tilde\alpha^u_n$ in $[(1+\varepsilon)^{-3},(1+\varepsilon)^3]$ by $\alpha_n(t)=(t+1,\tilde y_n(t))$ with $
(1+\varepsilon)^{-3}-1\leq t\leq 
(1+\varepsilon)^3-1$, where $\tilde y_n(t)=y_n(t+1)$.
By \eqref{eqn_varphi} and \eqref{eqn_dphi}, 
\begin{align}
\label{eqn_va}\varphi ( \alpha_n(t))&=
(a\tilde y_n(t)+bt\tilde y_n(t)+ct^3+\mathrm{h.o.t.},1+dt+e\tilde y_n(t)+\mathrm{h.o.t.})\\
\label{eqn_va'}d\varphi_{\alpha_n(t)}(\alpha_n'(t))&=
(a\tilde y_n'(t)+b\tilde y_n(t)+bt\tilde y_n'(t)+3ct^2+\mathrm{h.o.t.},\\
&\hspace{150pt} d+e\tilde y_n'(t)+\mathrm{h.o.t.}),\nonumber
\end{align}
where the primes represent the derivative on $t$ and `h.o.t.' denotes the sum of the higher order terms on $t$.

By \eqref{eqn_psi},
$$|\tilde y_n'(t)|=|y_n'(t+1)| = \lambda^n\mu^{-n}|y_0^{\prime}(\mu^{-n}(t+1))|.$$
Suppose that $\sigma$ is the maximum of $|y_0^{\prime}(x)|$ on $\delta^u$.
Then 
$$|\tilde y_n'(t)|=|y_n'(t+1)| = \lambda^n\mu^{-n}|y_0^{\prime}(\mu^{-n}(t+1))| \le \lambda^n\mu^{-n}\sigma.$$
for any $n \in \mathbb{N}$.
This implies that 
\begin{equation}\label{eqn_y_n'}
|\tilde y_n'(t)| \precsim \lambda^n\mu^{-n}.
\end{equation}
Suppose that $d\varphi_{\alpha_n(t)} (\alpha_n'(t))$ is vertical at $t=t_n$.
Then $\lim_{n\to \infty}t_n=0$ and, by \eqref{eqn_va'},  
$$b\tilde y_n(t_n)+(a+bt_n)\tilde y_n'(t_n)\approx -3ct_n^2.$$
Since $\tilde y_n(t)\approx \lambda^nz_0$ and $|\tilde y_n'(t)| \precsim \lambda^n\mu^{-n}$, 
this condition is equivalent to
\begin{equation}\label{eqn_3ctn}
3ct_n^2\approx -b\tilde y_n(t_n)\approx -b\lambda^nz_0.
\end{equation}
It follows that, for all sufficiently large $n$, $d\varphi_{\alpha_n(t)} (\alpha_n'(t))$ is vertical at two points 
$t_{n,\pm}$ with 
\begin{equation}\label{eqn_tn+-}
t_{n,\pm}\approx \pm\sqrt{\dfrac{-bz_0}{3c}}\lambda^{\frac{n}2}.
\end{equation}
Let $\tilde t_{n,\pm}$ be the elements of  $[(1+\varepsilon)^{-3}-1, (1+\varepsilon)^3-1]$ 
with $\tilde t_{n,-} < t_{n,-}$, $ t_{n,+} < \tilde t_{n,+}$ 
such that $\varphi(\alpha_n(\tilde t_{n,\pm}))$ is the intersection point of $\varphi(\alpha_n(t))$ 
and the vertical line $L_{n,\pm}$ tangent to $\varphi(\alpha_n(t))$ at $\varphi(\alpha_n(t_{n,\mp}))$. 
Let $S_n$ be the smallest orthogonal rectangle in $U(r)$ 
containing the four points 
$\varphi(\alpha_n(\tilde t_{n,-}))$, $\varphi(\alpha_n(t_{n,-}))$, $\varphi(\alpha_n(t_{n,+}))$, $\varphi(\alpha_n(\tilde t_{n,+}))$,
see Figure \ref{fig_Sn}.

\begin{figure}[hbt]
\centering
\scalebox{0.3}{\includegraphics[clip]{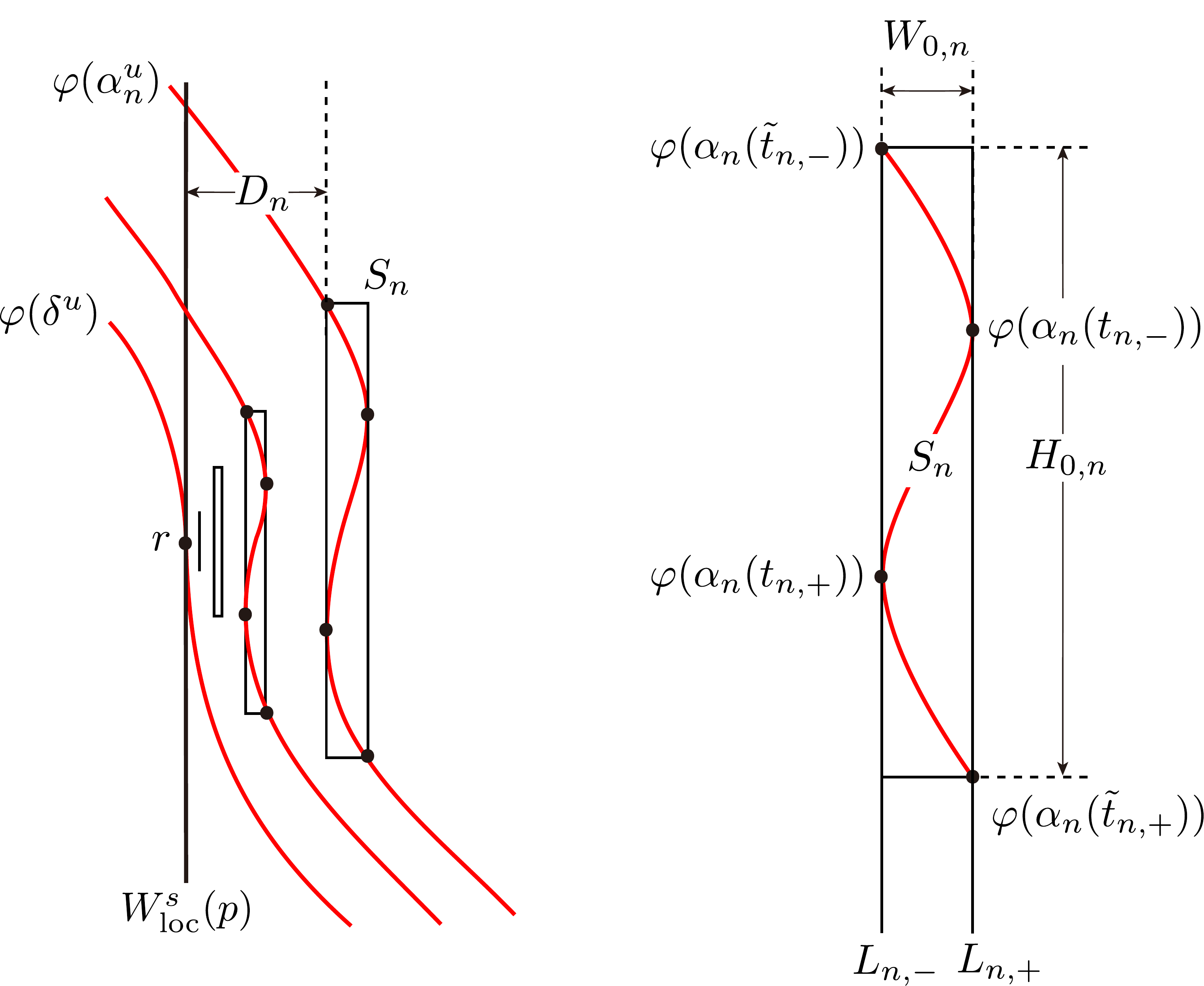}}
\caption{}
\label{fig_Sn}
\end{figure}

Now we will estimate the size of $S_n$.
Let $D_n$ be the distance between $S_n$ and $W_{\mathrm{loc}}^s(p)$.
Then
\begin{equation}\label{eqn_SD}
\begin{split}
D_n&\approx 
a\tilde y_n(t_{n,+})+bt_{n,+}\tilde y_n(t_{n,+})+ct_{n,+}^3\\
&\approx
az_0\lambda^n+bz_0\sqrt{\frac{-bz_0}{3c}}\lambda^{\frac32 n}-\frac{bz_0}{3}\sqrt{\frac{-bz_0}{3c}}\lambda^{\frac{3}{2}n} \sim \lambda^n.
\end{split}
\end{equation}
By \eqref{eqn_varphi}, the width $W_{0,n}$ of $S_n$ 
is represented as  
\begin{align*}
W_{0,n}&\approx (a\tilde y_n(t_{n,-})+bt_{n,-}\tilde y_n(t_{n,-})+ct_{n,-}^3)-(a\tilde y_n(t_{n,+})+bt_{n,+}\tilde y_n(t_{n,+})+ct_{n,+}^3)\\
&=a(\tilde y_n(t_{n,-})-\tilde y_n(t_{n,+}))+b(t_{n,-}\tilde y_n(t_{n,-})-t_{n,+}\tilde y_n(t_{n,+}))+c(t_{n,-}^3-t_{n,+}^3).
\end{align*}
It follows from Mean Value Theorem together with \eqref{eqn_y_n'} that
$$|\tilde y_n(t_{n,-})-\tilde y_n(t_{n,+})|\precsim \lambda^n\mu^{-n}|t_{n,-} -t_{n,+}|\sim \lambda^{\frac{3}{2}n}\mu^{-n}.$$
Moreover, by \eqref{eqn_tn+-}, we have
\begin{align*}
c(t_{n,-}^3-t_{n,+}^3)&\approx  
c\left(t_{n,-}\left(\frac{-b\tilde y_n(t_{n,-})}{3c}\right)-t_{n,+}\left(\frac{-b\tilde y_n(t_{n,+})}{3c}\right)\right)\\
&=-\frac{b}3(t_{n,-}\tilde y_n(t_{n,-})-t_{n,+}\tilde y_n(t_{n,+})).
\end{align*}
Since 
\begin{align*}
t_{n,-}\tilde y_n(t_{n,-})-t_{n,+}\tilde y_n(t_{n,+})&=(t_{n,-}-t_{n,+})\tilde y_n(t_{n,-})+t_{n,+}(\tilde y_n(t_{n,-})-y_n(t_{n,+}))\\
&\approx -\sqrt{\frac{-bz_0}{3c}}\lambda^{\frac{n}2}\cdot z_0\lambda^n
+O\left(\lambda^{\frac{n}2}\cdot \lambda^{\frac{3}{2}n}\mu^{-n}\right) \sim -\lambda^{\frac{3}{2}n},
\end{align*}
we have
\begin{equation}\label{eqn_SW}
\begin{split}
W_{0,n}\approx O(\lambda^{\frac{3}{2}n} \mu^{-n})+\frac{2b}{3} (t_{n,-}\tilde y_n(t_{n,-})-t_{n,+}\tilde y_n(t_{n,+})) \sim \lambda^{\frac{3}{2}n}.
\end{split}
\end{equation}

Next we estimate the height $H_{0,n}$ of $S_n$.
For that, we estimate $W_{0,n}$ again by using $\tilde t_{n,+}$ and $t_{n,+}$ instead of 
$t_{n,-}$ and $t_{n,+}$.
Since $\tilde t_{n,+}>t_{n,+}$, one can set $\tilde t_{n,+}=t_{n,+}+\rho_n\lambda^{\frac{n}2}$ for some $\rho_n>0$.
\begin{align*}
W_{0,n}&\approx 
a(\tilde y_n(\tilde t_{n,+})-\tilde y_n(t_{n,+}))+
b(\tilde t_{n,+}\tilde y_n(\tilde t_{n,+})-t_{n,+}\tilde y_n(t_{n,+}))
+c(\tilde t_{n,+}^3-\tilde t_{n,-}^3)\\
&=(a+b\tilde t_{n,+})(\tilde y_n(\tilde t_{n,+})-\tilde y_n(t_{n,+}))
+b(\tilde t_{n,+}-t_{n,+})\tilde y_n(t_{n,+})+c(\tilde t_{n,+}^3-t_{n,-}^3).
\end{align*}
Again by Mean Value Theorem together with \eqref{eqn_y_n'},
$$|\tilde y_n(\tilde t_{n,+})-\tilde y_n(t_{n,+})| \precsim \lambda^n\mu^{-n}\cdot \rho_n\lambda^{\frac{n}2} 
=\rho_n\lambda^{\frac{3}{2}n}\mu^{-n}.$$
Moreover, we have
\begin{align*}
(\tilde t_{n,+}-t_{n,+})\tilde y_n(t_{n,+}) &\sim \rho_n\lambda^{\frac{n}2}\cdot \lambda^n =\rho_n\lambda^{\frac{3}{2}n}
\end{align*}
and
\begin{align*}
\tilde t_{n,+}^3-t_{n,+}^3&=3\rho_n^2\lambda^n t_{n,+}+3\rho_n\lambda^{\frac{n}2}t_{n,+}^2+\rho_n^3\lambda^{\frac{3}{2}n}\\
&\approx \left(3\rho_n\sqrt{\dfrac{-bz_0}{3c}}-\frac{3bz_0}{c}+\rho_n^2\right)\rho_n\lambda^{\frac{3}{2}n}.
\end{align*}
This shows that 
\begin{align*}
W_{0,n}&\sim \left( a\mu^{-n} + bz_0 + 3\rho_n\sqrt{\dfrac{-bz_0}{3c}}-\frac{3bz_0}{c}+\rho_n^2\right)
\rho_n\lambda^{\frac{3}{2}n}.
\end{align*}
Since $W_{0,n} \sim \lambda^{\frac{3}{2}n}$, it follows that $\rho_n \sim 1$ and hence 
$\tilde t_{n,+}\sim \lambda^{\frac{n}2}$.
Similarly $-\tilde t_{n,-}\sim \lambda^{\frac{n}2}$.
This implies that 
\begin{equation}
|\tilde t_{n,\pm}|\sim \lambda^{\frac{n}2}.
\end{equation}
Therefore we have
\begin{equation}\label{eqn_SH}
\begin{split}
H_{0,n}&=(1+d\tilde t_{n,-}+e\tilde y_n(\tilde t_{n,-})) - (1+d\tilde t_{n,+}+e\tilde y_n(\tilde t_{n,+}))\\
&=d(\tilde t_{n,-} - \tilde t_{n,+}) + e(\tilde y_n(\tilde t_{n,-})-\tilde y_n(\tilde t_{n,+})) \sim \lambda^{\frac{n}{2}}+O(\lambda^{\frac{3}{2}n}\mu^{-n}) \sim \lambda^{\frac{n}{2}}.
\end{split}
\end{equation}
In particular, $\{S_n\}$ is a sequence  of rectangles converging to the cubic tangency $r$.

\section{Slope Lemma}\label{S_SLPL}

Let $\boldsymbol{v}=\begin{bmatrix}u\\v\end{bmatrix}\in T_{\boldsymbol{x}}(M)$ be a tangent vector at $\boldsymbol{x}\in U(p)$ 
with $u\neq 0$.
Then we say that $|vu^{-1}|$ is the (absolute) \emph{slope} of $\boldsymbol{v}$ and 
denote it by $\mathrm{Slope}(\boldsymbol{v})$.

Consider any tangent vector $\boldsymbol{v}_0=\begin{bmatrix}1\\ \delta\end{bmatrix}\in T_{\boldsymbol{x}}(M)$ at 
$\boldsymbol{x}=(x+1,y)\in R_\varepsilon$ 
with $|\delta|\leq \varepsilon^{\frac{5}{2}}$.
We set $\boldsymbol{v}_0'=d\varphi_{(x+1,y)}(\boldsymbol{v}_0)$ and $\boldsymbol{v}_1=df^{u_0}_{\varphi(x+1,y)} (\boldsymbol{v}_0')$.
By \eqref{eqn_dphi}, 
$$\mathrm{Slope}(\boldsymbol{v}_0')\approx \frac{|d+e\delta|}{|3cx^2+a\delta|}.$$
Since $\varepsilon \le x$ and $|\delta|\leq \varepsilon^{\frac{5}{2}}$, 
\begin{align*}
\mathrm{Slope}(\boldsymbol{v}_0') &\approx \frac{|d+e\delta|}{|3cx^2+a\delta|} \leq \frac{|d|+|e\delta|}{|3cx^2|-|a\delta|} \leq \frac{|d|+|e\varepsilon^{\frac{5}{2}}|}{|3c\varepsilon^2|-|a\varepsilon^{\frac{5}{2}}|}\\
& = \frac{|d|+|e\varepsilon^{\frac{5}{2}}|}{|3c|-|a\varepsilon^{\frac{1}{2}}|}\varepsilon^{-2} = \frac{|d|+|e\varepsilon^{\frac{5}{2}}|}{|3c|-|a\varepsilon^{\frac{1}{2}}|}\varepsilon^{\frac{1}{2}}\cdot \varepsilon^{-\frac{5}{2}}.
\end{align*}
By taking $\varepsilon_1>0$ sufficiently small, for any $0 < \varepsilon \le \varepsilon_1$, we have
\begin{align*}
\mathrm{Slope}(\boldsymbol{v}_0') 
\le 2\frac{|d|+|e\varepsilon^{\frac{5}{2}}|}{|3c|-|a\varepsilon^{\frac{1}{2}}|}\varepsilon^{\frac{1}{2}}\cdot \varepsilon^{-\frac{5}{2}} 
\le \frac{|3d|}{|2c|}\varepsilon^{\frac{1}{2}}\cdot \varepsilon^{-\frac{5}{2}} 
\le 1 \cdot \varepsilon^{-\frac{5}{2}}.
\end{align*}
Then,by \eqref{eqn_slow1} and \eqref{eqn_muu0}, we have
\begin{align*}
\mathrm{Slope}(\boldsymbol{v}_1)&=\mathrm{Slope}(\boldsymbol{v}_0')\lambda^{u_0}\mu^{-u_0}
\leq \varepsilon^{-\frac52}\lambda^{u_0}\mu^{-u_0}
\leq \varepsilon^{-\frac52}\mu^{-\frac52 u_0}
\leq \varepsilon^{-\frac52}\varepsilon^5=\varepsilon^{\frac52}.
\end{align*}
Thus we get the following lemma.
See Figure \ref{fig_slope}.

\begin{figure}[hbt]
\centering
\scalebox{0.3}{\includegraphics[clip]{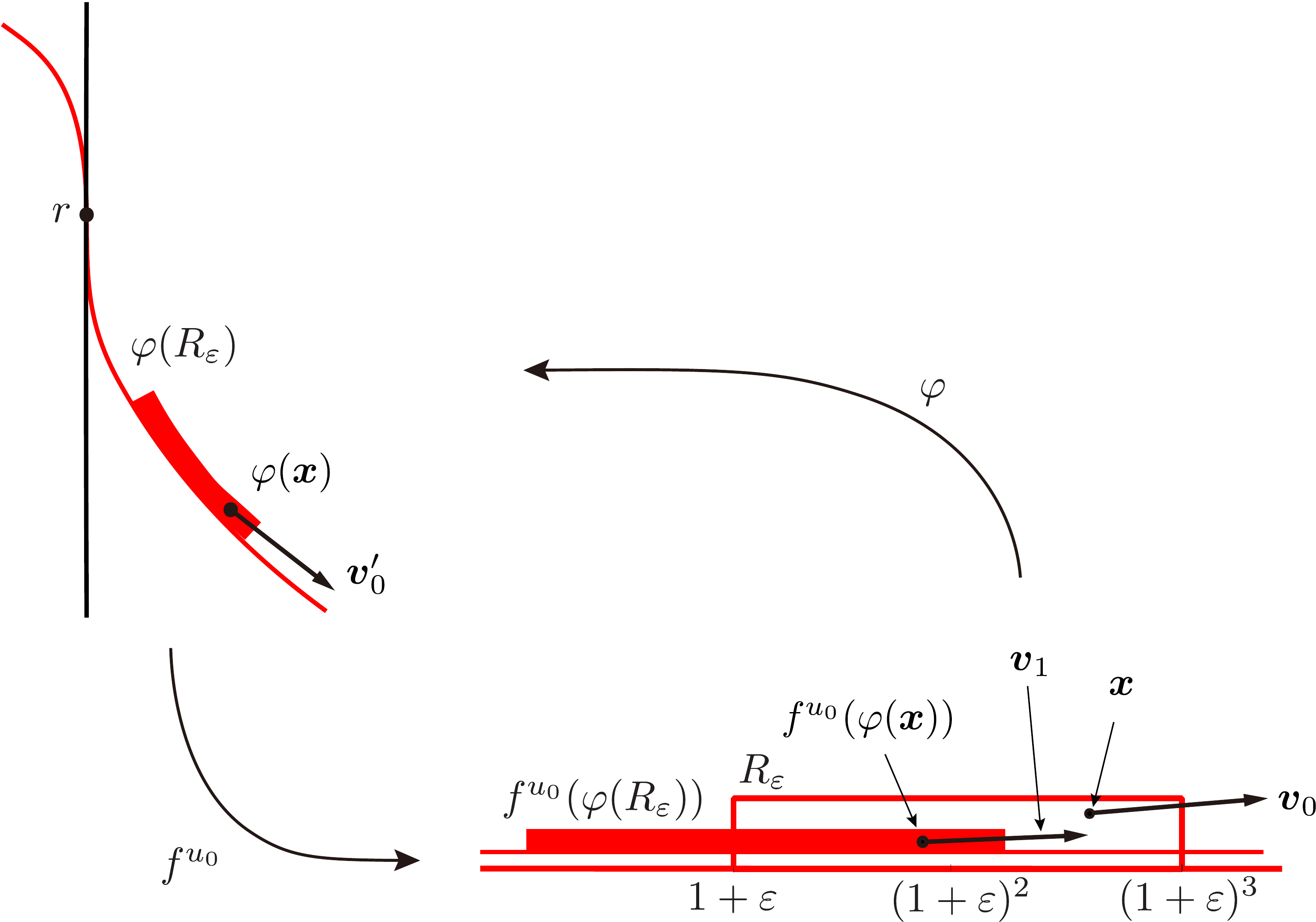}}
\caption{}
\label{fig_slope}
\end{figure}

\begin{lemma}[Slope Lemma I]\label{l_slope}
Suppose that $f$ satisfies the conditions \eqref{eqn_slow1}.
Then there exists a constant $\varepsilon_1>0$ such that, 
if $\varepsilon \in (0, \varepsilon_1]$, 
then
\begin{equation}\label{eqn_slow2}
\mathrm{Slope}(\boldsymbol{v}_0')\leq \varepsilon^{-\frac{5}{2}}\quad\text{and}\quad \mathrm{Slope}(\boldsymbol{v}_1)\leq \varepsilon^{\frac{5}{2}}
\end{equation}
for any tangent vector $\boldsymbol{v}_0\in T_{\boldsymbol{x}}(M)$ at 
$\boldsymbol{x}=(x+1,y)\in R_\varepsilon$ with $\mathrm{Slope}(\boldsymbol{v}_0)\leq \varepsilon^{\frac{5}{2}}$.
\end{lemma}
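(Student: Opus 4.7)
The plan is to estimate the two slope bounds directly using the explicit Jacobian formula \eqref{eqn_dphi} for $d\varphi$, the diagonal form of $df^{u_0}$ from \eqref{eqn_linear}, and the constraints on $R_\varepsilon$ and $u_0$ supplied by \eqref{eqn_slow1}--\eqref{eqn_muu0}.

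First, writing $\boldsymbol{v}_0=\begin{bmatrix}1\\ \delta\end{bmatrix}$ with $|\delta|\leq \varepsilon^{5/2}$, I would push $\boldsymbol{v}_0$ forward by $d\varphi_{(x+1,y)}$ using \eqref{eqn_dphi}. The first bound should then follow from a straightforward fractional estimate: on $R_\varepsilon$ we have $x\ge \varepsilon$, so the $x$-component of $\boldsymbol{v}_0'$ is of size at least $|3c|\varepsilon^2-|a|\varepsilon^{5/2}$ while the $y$-component is at most $|d|+|e|\varepsilon^{5/2}$ in absolute value (the higher order terms from $H_1,H_2$ are absorbed into these estimates once $\varepsilon_1$ is small). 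This gives $\mathrm{Slope}(\boldsymbol{v}_0')\precsim \varepsilon^{-2}$, and choosing $\varepsilon_1$ so that $\varepsilon^{1/2}$ absorbs the implied constant yields $\mathrm{Slope}(\boldsymbol{v}_0')\leq \varepsilon^{-5/2}$.

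For the second bound, since $f^{u_0}|_{U(p)}$ is linear with $df^{u_0}=\mathrm{diag}(\mu^{u_0},\lambda^{u_0})$, applying $df^{u_0}$ to $\boldsymbol{v}_0'$ simply multiplies the slope by $\lambda^{u_0}\mu^{-u_0}$. The hypothesis \eqref{eqn_slow1} gives $\lambda<\mu^{-3/2}$, so $\lambda\mu^{-1}<\mu^{-5/2}$, and $\mathrm{Slope}(\boldsymbol{v}_1)\le \varepsilon^{-5/2}\mu^{-5u_0/2}$. From the chain of inequalities \eqref{eqn_muu0} we extract $\mu^{u_0}\ge \varepsilon^{-2}$ (dividing the left inequality $1<\mu^{u_0}\mathrm{pr}_x(\varphi(\boldsymbol{x}))<\tau_1\mu^{u_0}\varepsilon^3<\mu^{u_0}\varepsilon^2$ by $\varepsilon^2$), hence $\mu^{-5u_0/2}\le \varepsilon^5$, and multiplying gives $\mathrm{Slope}(\boldsymbol{v}_1)\le \varepsilon^{5/2}$.

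The only delicate point will be keeping track of the implicit constants carefully enough to ensure a single $\varepsilon_1$ works for both inequalities. The key arithmetic identity driving the whole argument is $\varepsilon^{-5/2}\cdot \varepsilon^5 = \varepsilon^{5/2}$, which is why the exponent $5/2$ and the gap condition $\mu^{3/2}<\lambda^{-1}$ match perfectly: the first step loses $\varepsilon^{-5/2}$ in the $d\varphi$-expansion near the cubic tangency (where the horizontal derivative vanishes to order $x^2$), and the contraction $\lambda^{u_0}\mu^{-u_0}$ during the $u_0$ iterations inside $U(p)$ recovers this loss with a safety margin of $\varepsilon^{5/2}$, closing the cone $\{\mathrm{Slope}\leq \varepsilon^{5/2}\}$ into itself.
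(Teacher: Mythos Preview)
Your proof is correct and follows essentially the same approach as the paper: the paper also writes $\boldsymbol{v}_0=\begin{bmatrix}1\\ \delta\end{bmatrix}$, bounds $\mathrm{Slope}(\boldsymbol{v}_0')$ by $\dfrac{|d|+|e|\varepsilon^{5/2}}{|3c|\varepsilon^2-|a|\varepsilon^{5/2}}$ and absorbs the constant into $\varepsilon^{1/2}$, then uses $\lambda^{u_0}\mu^{-u_0}<\mu^{-5u_0/2}$ from \eqref{eqn_slow1} together with $\mu^{u_0}>\varepsilon^{-2}$ from \eqref{eqn_muu0} to obtain the identical chain $\mathrm{Slope}(\boldsymbol{v}_1)\leq \varepsilon^{-5/2}\mu^{-5u_0/2}\leq \varepsilon^{-5/2}\varepsilon^5=\varepsilon^{5/2}$.
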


Fix a sufficiently small $s>0$ and set 
$\mathrm{pr}_x(S_n)=[s^-_n,s^+_n]$ for $n\in \mathbb{N}$.
If $n$ is sufficiently large, then $[s_n^-,s_n^+]\subset (0,s]$.
Let $\beta_n^u(s)$ be the component of $\varphi(\alpha_n^u) \cap \mathrm{pr}_x^{-1}((0,s])$ 
containing $\varphi(\alpha_n([\tilde t_{n,-},\tilde t_{n,+}]))$.
For any $\boldsymbol{x}\in \beta_n^u(s)$, let $j_n(\boldsymbol{x})$ be a positive integer 
such that 
$f^j(\boldsymbol{x})\in U(p)$ for $j=1,\dots,j_n(\boldsymbol{x})$ and 
$\mathrm{pr}_x(f^{j_n(\boldsymbol{x})}(\boldsymbol{x})) \in [1+\varepsilon,(1+\varepsilon)^3]$.
For any $\varepsilon>0$, one can take $s$ 
so that $\mathrm{pr}_x(f^{j_n(\boldsymbol{x})}(\boldsymbol{x})) \in R_\varepsilon$ for any $\boldsymbol{x} \in \beta_n^u(s)$.
Let $\boldsymbol{v}(\boldsymbol{x})$ be a unit vector tangent to $\beta_n^u(s)$ at $\boldsymbol{x}$.

The following result is applied to $f_1$ in the proof of Theorem \ref{thm_A}.

\begin{lemma}[Slope Lemma II]\label{l_slopef1}
Let $\varepsilon_1$ be the constant given in Lemma \ref{l_slope}.
For any $\varepsilon \in (0, \varepsilon_1]$, there exist $s>0$ and $n_0\in \mathbb{N}$ 
such that
$$\mathrm{Slope}(df_{\boldsymbol{x}}^{j_n(\boldsymbol{x})}(\boldsymbol{v}(\boldsymbol{x})))<\varepsilon^{\frac{5}{2}}$$
if $n\geq n_0$ and $\boldsymbol{x}\in \beta_n^u(s)\setminus S_n$.
\end{lemma}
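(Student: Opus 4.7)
The plan is to combine two estimates: an upper bound for the slope of $\boldsymbol{v}(\boldsymbol{x})$ along $\varphi(\alpha_n^u)$, and the multiplicative contraction factor $(\lambda/\mu)^{j_n(\boldsymbol{x})}$ coming from linearity of $f$ on $U(p)$. Write $\boldsymbol{x}=\varphi(\alpha_n(t))$. Since $\varphi(\alpha_n([\tilde t_{n,-},\tilde t_{n,+}]))\subset S_n$, the hypothesis $\boldsymbol{x}\notin S_n$ forces $t\notin[\tilde t_{n,-},\tilde t_{n,+}]$, and from $|\tilde t_{n,\pm}|\sim \lambda^{n/2}$ (established just before \eqref{eqn_SH}) one can fix a constant $c_0>0$ so that $|t|\ge c_0\lambda^{n/2}$ and $3ct^2\ge 4|bz_0|\lambda^n$. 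Applying this bound in \eqref{eqn_va'} makes $3ct^2$ dominate the $x$-component---the term $b\tilde y_n(t)\approx bz_0\lambda^n$ is a controlled fraction of it and $(a+bt)\tilde y_n'(t)=O(\lambda^n\mu^{-n})$ by \eqref{eqn_y_n'}---while the $y$-component equals $d+o(1)$. Hence $\mathrm{Slope}(\boldsymbol{v}(\boldsymbol{x}))\le C_1/t^2$ with $C_1$ independent of $n$ and $\boldsymbol{x}$.

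For the contraction, \eqref{eqn_linear} gives $df^{j_n}_{\boldsymbol{x}}=\mathrm{diag}(\mu^{j_n},\lambda^{j_n})$, so $\mathrm{Slope}(df^{j_n(\boldsymbol{x})}_{\boldsymbol{x}}(\boldsymbol{v}))=(\lambda/\mu)^{j_n}\,\mathrm{Slope}(\boldsymbol{v})$. The defining property $\mu^{j_n}\mathrm{pr}_x(\boldsymbol{x})\in[1+\varepsilon,(1+\varepsilon)^3]$ yields $\mu^{-j_n}\sim \mathrm{pr}_x(\boldsymbol{x})$, and setting $\alpha:=\log\lambda^{-1}/\log\mu>0$ gives $\lambda^{j_n}\sim \mathrm{pr}_x(\boldsymbol{x})^{\alpha}$, hence $(\lambda/\mu)^{j_n}\sim \mathrm{pr}_x(\boldsymbol{x})^{1+\alpha}$. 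Using \eqref{eqn_va}, $\tilde y_n(t)\approx z_0\lambda^n$, and $|t|\ge c_0\lambda^{n/2}$ (which absorbs $bt\tilde y_n(t)$ into the $t^3$ term), one obtains $\mathrm{pr}_x(\boldsymbol{x})\le C_2(\lambda^n+t^3)$. Combining,
\begin{equation*}
\mathrm{Slope}\bigl(df^{j_n(\boldsymbol{x})}_{\boldsymbol{x}}(\boldsymbol{v}(\boldsymbol{x}))\bigr)\le C_3\,\frac{(\lambda^n+t^3)^{1+\alpha}}{t^2}\le C_4\bigl(\lambda^{n\alpha}+t^{1+3\alpha}\bigr),
\end{equation*}
where $t^2\ge c_0^2\lambda^n$ is used to bound the first summand. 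On $\beta_n^u(s)\setminus S_n$ one has $|t|\lesssim s^{1/3}$ (for $n$ large), so I first choose $s>0$ so small that $C_4 s^{(1+3\alpha)/3}<\varepsilon^{5/2}/2$, then take $n_0\in\mathbb{N}$ with $C_4\lambda^{n_0\alpha}<\varepsilon^{5/2}/2$.

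The main obstacle is the part of $\beta_n^u(s)\setminus S_n$ adjacent to $\partial S_n$, where $\varphi(\alpha_n^u)$ is nearly vertical with slope of order $\lambda^{-n}$, so no uniform pointwise slope bound is available. The argument must exploit the fact that such points have $\mathrm{pr}_x(\boldsymbol{x})\sim\lambda^n$ and hence require $j_n\sim n\alpha$ iterations to enter $R_\varepsilon$, producing the strong contraction $(\lambda/\mu)^{j_n}\sim\lambda^{n(1+\alpha)}$ which precisely cancels the vertical slope and leaves a residual factor $\lambda^{n\alpha}$. The small expanding condition enters because it makes $\alpha$ large, so that this residual contraction dominates comfortably.
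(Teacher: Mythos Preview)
Your argument is correct and follows essentially the same route as the paper: both obtain $\mathrm{Slope}(\boldsymbol{v}(\boldsymbol{x}))\precsim t^{-2}$ for $t\notin[\tilde t_{n,-},\tilde t_{n,+}]$, combine this with $(\lambda/\mu)^{j_n(\boldsymbol{x})}$ via $\mu^{j_n}\xi_n(t)\sim 1$, split according to whether $ct^3$ or $a\tilde y_n(t)\sim\lambda^n$ dominates in $\xi_n(t)$, and then choose $s$ small. The only cosmetic difference is bookkeeping: the paper packages the final estimate as $\mathrm{Slope}\precsim \lambda^{j_n(\boldsymbol{x})}$ and uses $\mathrm{pr}_x(\boldsymbol{x})\le s$ directly, whereas you introduce $\alpha=-\log\lambda/\log\mu$ and write the bound as $C_4(\lambda^{n\alpha}+|t|^{1+3\alpha})$; these are equivalent since $\lambda^{j_n}\sim\xi_n(t)^{\alpha}$.

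One small caveat: the specific factor $4$ in ``$3ct^2\ge 4|bz_0|\lambda^n$'' is not justified as stated. What $|\tilde t_{n,\pm}|\sim\lambda^{n/2}$ together with $\rho_n\sim 1$ (bounded below) gives is $\tilde t_{n,+}\ge(1+\delta)t_{n,+}$ for some fixed $\delta>0$, hence $3ct^2\ge(1+\delta)^2|bz_0|\lambda^n$ for $t\ge\tilde t_{n,+}$; the constant $(1+\delta)^2$ need not be as large as $4$, but any value strictly greater than $1$ suffices for your domination argument. The paper sidesteps this by writing $\xi_n'(t)=\xi_n'(t)-\xi_n'(t_{n,+})\sim t^2-t_{n,+}^2$ and then proving $t^2-t_{n,+}^2\sim t^2$, which is the same content phrased without an explicit constant.
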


\begin{proof}
We only consider the case where $\boldsymbol{x}$ is an element of $\beta_n^u(s)\setminus S_n$ 
with $\mathrm{pr}_x(\boldsymbol{x})\geq s_n^+$.
Then $t\geq \tilde t_{n,+}$ holds if $\varphi(\alpha_n(t))=\boldsymbol{x}$. 
The proof in the case of $\mathrm{pr}_x(\boldsymbol{x})\leq s_n^-$ is done similarly.
Since $\rho_n\sim 1$ and $\tilde t_{n,+}=t_{n,+}+\rho_n\lambda^{\frac{n}{2}}$, 
$t-t_{n,+}\geq \tilde t_{n,+}-t_{n,+}\sim \lambda^{\frac{n}2}.$
This implies that
\begin{equation}\label{eqn_t-t}
t^2-t_{n,+}^2\sim t^2\succsim \lambda^n.
\end{equation}
In fact, if $t-t_{n,+}\geq \frac{t}2$, then 
$t^2-t_{n,+}^2=(t-t_{n,+})(t+t_{n,+})>\frac{t^2}2$ and hence \eqref{eqn_t-t} holds.
On the other hand, if $t-t_{n,+}\leq \frac{t}2$, then $t\leq 2t_{n,+}$ and so $t\sim \lambda^{\frac{n}2}$.
It follows that $t+t_{n,+}\sim \lambda^{\frac{n}2}$ and $t-t_{n,+}\sim \lambda^{\frac{n}2}$.
Then $t^2-t_{n,+}^2\sim \lambda^n\sim t^2$.
Thus \eqref{eqn_t-t} holds.

We set $\xi_n(t)=\mathrm{pr}_x(\boldsymbol{x})=\mathrm{pr}_x(\varphi(\alpha_n(t)))$.
By \eqref{eqn_va},
\begin{equation}\label{eqn_xixi}
\begin{split}
\xi_n(t)&=a\tilde y_n(t)+bt\tilde y_n(t)+ct^3+\mathrm{h.o.t.},\\
\xi_n'(t)&=
a\tilde y_n'(t)+b\tilde y_n(t)+bt\tilde y_n'(t)+3ct^2+\mathrm{h.o.t.}
\end{split}
\end{equation}
From the definition of $j_n(\boldsymbol{x})$, 
$$\mu^{j_n(\boldsymbol{x})} \xi_n(t)= \mu^{j_n(\boldsymbol{x})} \mathrm{pr}_x(\boldsymbol{x})= \mathrm{pr}_x(f^{j_n(\boldsymbol{x})}(\boldsymbol{x})) \in [1+\varepsilon,(1+\varepsilon)^3].$$
This implies that $\mu^{j_n(\boldsymbol{x})}\xi_n(t)\sim 1$.
We note that $\xi_n'(t_{n,+})=0$.
By Mean Value Theorem, $\tilde y_n(t)-\tilde y_n(t_{n,+})=\tilde y_n'(c)(t-t_{n,+})$ for some 
$t_{n,+}<c<t$.
From this fact together with \eqref{eqn_psi}, \eqref{eqn_y_n'}, \eqref{eqn_t-t} and 
\eqref{eqn_xixi}, we know that 
$$\xi_n'(t)=\xi_n'(t)-\xi_n'(t_{n,+})\sim t^2-t_{n,+}^2\sim t^2.$$
By \eqref{eqn_va'}, $\mathrm{Slope}(\boldsymbol{v}(\boldsymbol{x}))\sim t^{-2}$.
Hence we have
\begin{equation}\label{eqn_slopedf}
\mathrm{Slope}(df_{\boldsymbol{x}}^{j_n(\boldsymbol{x})}(\boldsymbol{v}(\boldsymbol{x}))) 
=\mathrm{Slope}(\boldsymbol{v}(\boldsymbol{x})) \cdot \frac{\lambda^{j_n(\boldsymbol{x})}}{\mu^{j_n(\boldsymbol{x})}} 
\sim t^{-2}\lambda^{j_n(\boldsymbol{x})}\xi_n(t).
\end{equation}
Now we need to consider the following two cases.

\noindent{\it Case 1.}
$ct^3\leq a\tilde y_n(t)$.
By \eqref{eqn_xixi}, $\xi_n(t)\sim \lambda^n$.
Since $t^{-2}\precsim \lambda^{-n}$ by $t\succsim \lambda^{\frac{n}2}$, 
it follows from \eqref{eqn_slopedf} that
$$
\mathrm{Slope}(df_{\boldsymbol{x}}^{j_n(\boldsymbol{x})}(\boldsymbol{v}(\boldsymbol{x}))) \precsim \lambda^{-n}\lambda^{j_n(\boldsymbol{x})}\lambda^n=
\lambda^{j_n(\boldsymbol{x})}.$$

\noindent{\it Case 2.}
$ct^3\geq a\tilde y_n(t)$.
Again by \eqref{eqn_xixi}, we have $\xi_n(t)\sim t^3$.
Then, by \eqref{eqn_slopedf}, 
$$
\mathrm{Slope}(df_{\boldsymbol{x}}^{j_n(\boldsymbol{x})}(\boldsymbol{v}(\boldsymbol{x}))) 
\sim t^{-2}\lambda^{j_n(\boldsymbol{x})}t^3=t \lambda^{j_n(\boldsymbol{x})}\precsim \lambda^{j_n(\boldsymbol{x})}.
$$

Let $n_0(s)$ be the minimum positive integer with $s_{n_0(s)}^+ < s$.
Since $\lim_{s\to +0}n_0(s)=\infty$, one can take $s=s(\varepsilon)>0$ such that    
our desired inequality holds for any $\boldsymbol{x}\in \beta_n^u(s)\setminus S_n$.
\end{proof}

\section{Sequence of rectangle-like boxes}\label{S_Bbox}

Now we will define a sequence $\{B_{k,n}\}_{k=1}^\infty$ of rectangle-like boxes and 
estimate the sizes of them.

Recall that $\mathrm{pr}_x(S_n) = [s_n^-,s_n^+]$.
Let $i_n$ be the positive integer with $(1+\varepsilon)^2<\mu^{i_n}s_n^{+}\leq (1+\varepsilon)^3$. 
By \eqref{eqn_Rve}, $f^{i_n}(S_n)$ is contained in $R_\varepsilon$ for any sufficiently large $n$.
We set $f^{i_n}(S_n)=B_{1,n}=B_1$ for short.  
Since $s_n^{+} \sim \lambda^n$ by \eqref{eqn_SD} and \eqref{eqn_SW}, 
we have
\begin{equation}\label{eqn_innsim1}
\mu^{i_n}\lambda^n \sim 1.
\end{equation}
We denote the width and height of $B_1$ and 
the distance between $B_1$ and $W^u_{\mathrm{loc}}(p)$ by 
$W_{1,n}=W_1$, $H_{1,n}=H_1$ and $L_{1,n}=L_1$ respectively.
It follows from \eqref{eqn_SD}, \eqref{eqn_SW} and \eqref{eqn_SH} that 
\begin{equation}\label{eqn_W_1}
W_{1,n}\sim \lambda^{\frac32 n}\mu^{i_n}\sim \lambda^{\frac{n}2},\quad
H_{1,n}\sim \lambda^{\frac{n}2+i_n},\quad L_{1,n}\sim \lambda^{i_n}.
\end{equation}
Note that, for any sufficiently large $n$, $H_1\ll L_1\ll W_1$.
Consider a closed interval $\delta_1$ in $W_{\mathrm{loc}}^u(p)$ which is a 
small neighborhood of $\mathrm{pr}_x(B_1)$.

Let $v_i^{(1)}$, $e_i^{(1)}$ ($i=0,1,2,3$) be the vertices and edges of $B_1$ 
as illustrated in Figure \ref{fig_B1}\,(a).
\begin{figure}[hbt]
\centering
\scalebox{0.3}{\includegraphics[clip]{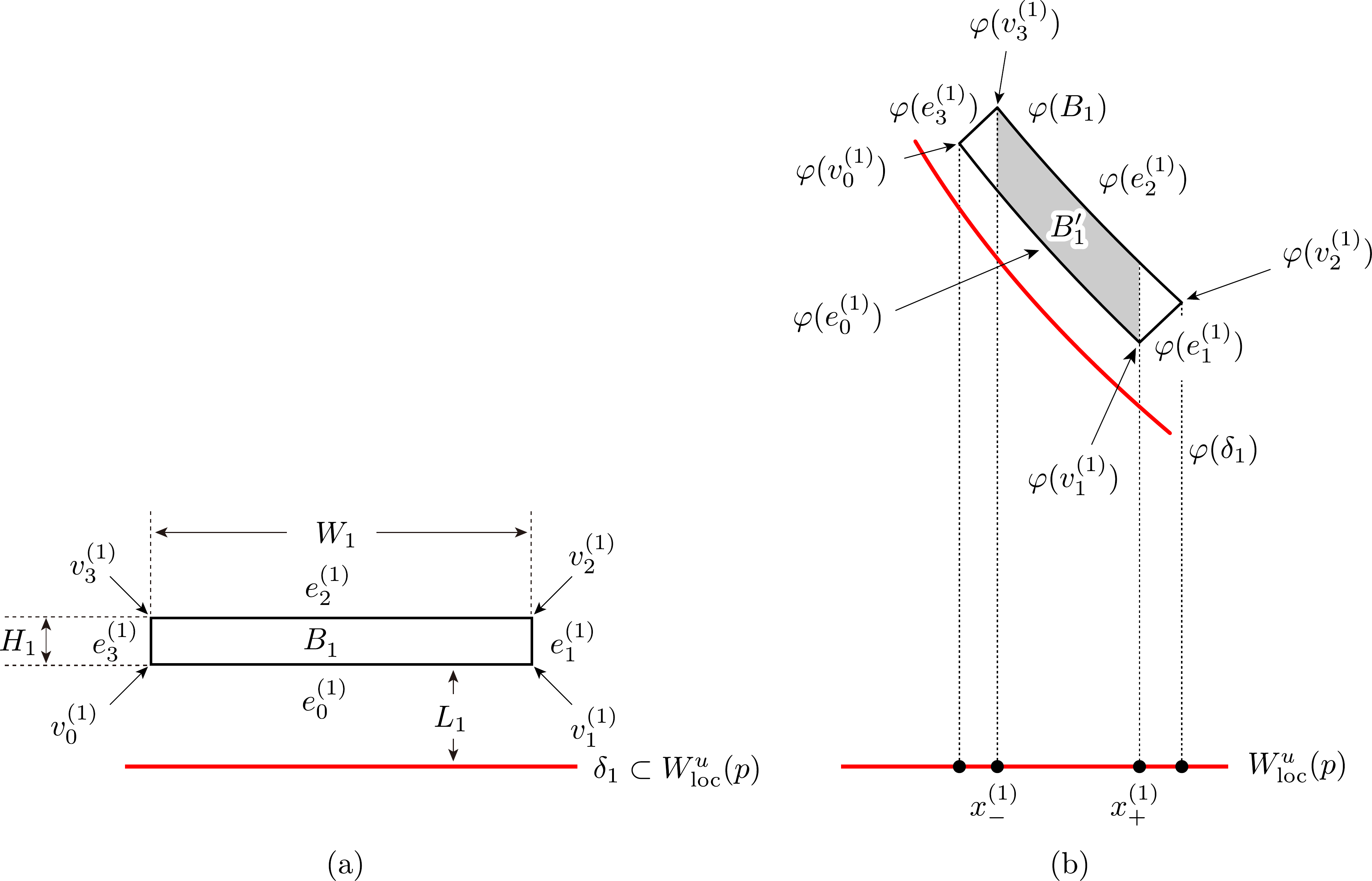}}
\caption{}
\label{fig_B1}
\end{figure}
We consider the image $\varphi(B_1)$.
By Lemma \ref{l_slope}, for $i=0,2$, 
$$\mathrm{diam}(\mathrm{pr}_x(\varphi(e_i^{(1)}))) \succsim \varepsilon^{\frac{5}{2}}W_1 \sim \varepsilon^{\frac{5}{2}}\lambda^{\frac{n}2}.$$
On the other hand, for $i=1,3$,
$$\mathrm{diam}(\mathrm{pr}_x(\varphi(e_i^{(1)}))) \precsim H_1 \sim \lambda^{\frac{n}2+i_n}.$$
Since $\lambda^{i_n}\varepsilon^{-\frac{5}{2}}$ can be supposed to be arbitrarily small 
for all sufficiently large $n$, 
\begin{equation}\label{eqn_x+x-}
\begin{split}
x_+^{(1)}-x_-^{(1)} &\succsim \varepsilon^{\frac{5}{2}}W_1-O(\lambda^{\frac{n}2+i_n}) \sim \varepsilon^{\frac{5}{2}}\lambda^{\frac{n}{2}}-O(\lambda^{\frac{n}{2}+i_n}) \\
&=\varepsilon^{\frac52}\lambda^{\frac{n}{2}}\left(1-\frac{O(\lambda^{i_n})}{\varepsilon^{\frac{5}{2}}} \right) \sim \varepsilon^{\frac{5}{2}}\lambda^{\frac{n}{2}}.
\end{split}
\end{equation}
where $x_+^{(1)}=\mathrm{pr}_x(\varphi(v_1^{(1)}))$ and $x_-^{(1)}=\mathrm{pr}_x(\varphi(v_3^{(1)}))$, 
see Figure \ref{fig_B1}\,(b).
Let $B_{1}'$ be the intersection $\mathrm{pr}_x^{-1}\bigl([x_-^{(1)}, x_+^{(1)}]\bigr)\cap \varphi(B_{1})$.
Any compact region in $U(p)$ like $B_{1}'$ is called a \emph{parallelogram-like box}.

Let $u_1$ be the positive integer with $(1+\varepsilon)^2 < \mu^{u_1}x_+^{(1)} \leq (1+\varepsilon)^3$.
By \eqref{eqn_Rve}, $f^{u_1}(B_1')$ is contained in $R_\varepsilon$ for any sufficiently large $n$. 
We denote $f^{u_1}(B_1')$ by $B_2$.
We call that any compact region in $U(p)$ like $B_{2}$ is a \emph{rectangle-like box}.

Let $B$ be either a parallelogram-like or rectangle-like box.
The \emph{horizontal width} of $B$ is the diameter of the interval $\mathrm{pr}_x(B)$.
The \emph{vertical height} of $B$ is the maximum of the lengths of $\eta(x_0)$ with $x_0\in \mathrm{pr}_x(B)$, where 
$\eta(x_0)$ is the intersection of $B$ and the vertical line $x=x_0$.
See Figure \ref{fig_vertical} in the case of $B=B_1'$. 
\begin{figure}[hbt]
\centering
\scalebox{0.3}{\includegraphics[clip]{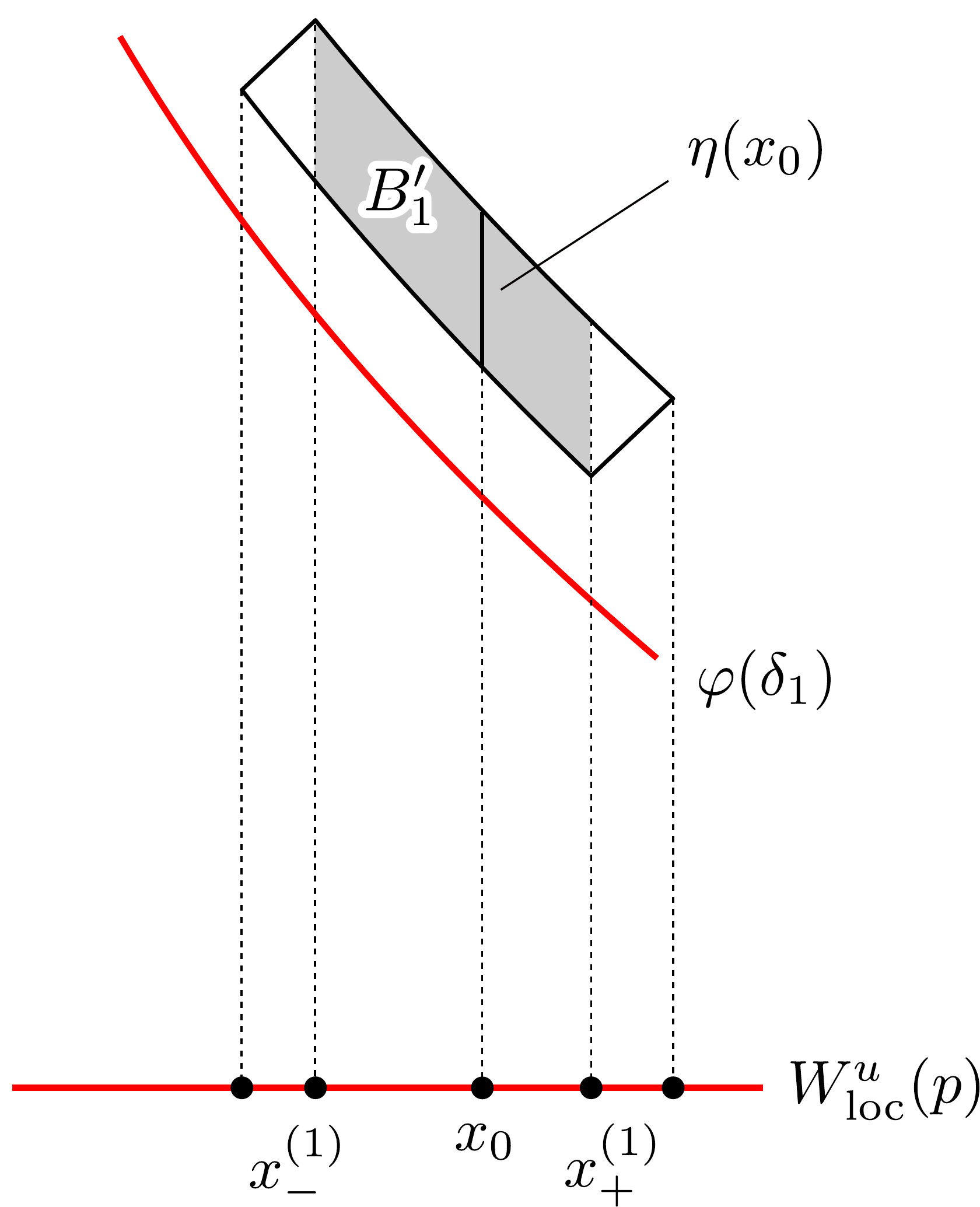}}
\caption{}
\label{fig_vertical}
\end{figure}
Suppose that $B$ is a rectangle-like box and $\delta$ is an almost horizontal arc in $U(q)$ with 
$B\cap \delta=\emptyset$ and $\mathrm{pr}_x(B)\subset \mathrm{pr}_x(\delta)$.
Then the \emph{vertical distance} between $B$ and $\delta$ is the maximum of $\sigma(x_1)$ with 
$x_1\in \mathrm{pr}_x(B)$, where $\sigma(x_1)$ is the length of the shortest segment in the vertical line $x=x_1$ connecting $B$ with $\delta$.

Let $\delta_2$ be a sub-arc of $f^{u_1}(\varphi(\delta_1))\subset W^u(p)$ such that 
$\mathrm{pr}_x(\delta_2)$ is a small neighborhood of $\mathrm{pr}_x(B_2)$ in $W_{\mathrm{loc}}^u(p)$.
We denote the horizontal width and vertical height of $B_2$ and the vertical distance between $B_2$ and $\delta_2$ by $W_2$, $H_2$ and $L_2$ respectively.
By \eqref{eqn_muu0}, \eqref{eqn_W_1} and \eqref{eqn_x+x-},
\begin{equation}\label{eqn_W2}
W_2=(x_+^{(1)}-x_-^{(1)})\mu^{u_1} \succsim \varepsilon^{\frac{5}{2}}\lambda^{\frac{n}2}\mu^{u_1} \geq 
\varepsilon^{-\frac{1}{2}}\tau_1^{-1}\lambda^{\frac{n}2} \sim \varepsilon^{-\frac{1}{2}}W_1.
\end{equation}

For any $x_0$ with $x_-^{(1)}\leq x_0\leq x_+^{(1)}$, 
$\eta(x_0)$ is a vertical segment connecting $\varphi(e_0^{(1)})$ with $\varphi(e_2^{(1)})$.
By this fact together with \eqref{eqn_slow2}, one can show the vertical height $H_1'$ of $B_1'$ satisfies 
$H_1' \precsim H_1\varepsilon^{-\frac{5}{2}}$.
It follows from \eqref{eqn_slow1} and \eqref{eqn_muu0} that 
\begin{equation}\label{eqn_H2}
H_2=\lambda^{u_1}H_1' \precsim \mu^{-\frac{3}{2}u_1} \varepsilon^{-\frac{5}{2}}H_1 < (\varepsilon^2)^{\frac{3}{2}}\varepsilon^{-\frac{5}{2}}H_1 =\varepsilon^{\frac12}H_1.
\end{equation}
Let $L_2$ be the vertical distance between $B_2$ and $\delta_2$.
By using an argument similar to that for the estimation \eqref{eqn_H2}, 
we have 
\begin{equation}\label{eqn_L2}
L_2\precsim \varepsilon^{\frac12}L_1.
\end{equation}

The following lemma is obtained immediately from \eqref{eqn_W2}, \eqref{eqn_H2} and \eqref{eqn_L2}.

\begin{lemma}\label{l_slow3}
Let $\varepsilon_1>0$ be the constant given in Lemma \ref{l_slope}.
Then there exists a constant $\varepsilon_0 \in (0, \varepsilon_1]$ 
such that, for any $\varepsilon \in (0, \varepsilon_0)$, the inequalities
\begin{equation*}
W_2\geq 10W_1,\quad H_2\leq 10^{-1}H_1\quad\mbox{and}\quad L_2\leq 10^{-1}L_1
\end{equation*}
hold.
\end{lemma}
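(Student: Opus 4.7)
The plan is to extract explicit forms of the three estimates \eqref{eqn_W2}, \eqref{eqn_H2}, \eqref{eqn_L2} and then choose $\varepsilon_0$ small enough that the prefactors hidden in the symbols $\succsim$ and $\precsim$ are dominated by the powers of $\varepsilon$ that appear. More precisely, by \eqref{eqn_W2} there is a constant $C_W>0$, depending only on the local data of $\varphi$ (i.e.\ on the coefficients $a,b,c,d,e$ and on $\tau_1$) such that $W_2 \geq C_W \varepsilon^{-1/2} W_1$ for all sufficiently large $n$. Similarly, \eqref{eqn_H2} and \eqref{eqn_L2} provide constants $C_H, C_L > 0$, independent of $n$ and of $\varepsilon$, such that $H_2 \leq C_H \varepsilon^{1/2} H_1$ and $L_2 \leq C_L \varepsilon^{1/2} L_1$.

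Given these inequalities, I would simply define
\[
\varepsilon_0 := \min\Bigl\{\varepsilon_1,\ (C_W/10)^2,\ (10 C_H)^{-2},\ (10 C_L)^{-2}\Bigr\}.
\]
For any $\varepsilon \in (0, \varepsilon_0)$, one has $C_W \varepsilon^{-1/2} \geq 10$, $C_H \varepsilon^{1/2} \leq 10^{-1}$ and $C_L \varepsilon^{1/2} \leq 10^{-1}$, which plugged into the three preceding estimates yield the three desired inequalities in the statement. Since $\varepsilon_0 \leq \varepsilon_1$, all the conclusions of Slope Lemma I (Lemma \ref{l_slope}) continue to hold, so the earlier estimates \eqref{eqn_W2}, \eqref{eqn_H2}, \eqref{eqn_L2} remain valid for this range of $\varepsilon$.

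The only step requiring care — and what I would regard as the main (very minor) obstacle — is verifying that the constants $C_W, C_H, C_L$ are genuinely uniform in $n$. For $C_W$ this needs a second glance at \eqref{eqn_x+x-}: one must confirm that the error factor $1 - O(\lambda^{i_n})/\varepsilon^{5/2}$ is bounded below by, say, $1/2$ for all $n \geq n_0(\varepsilon)$, which holds because $\lambda^{i_n} \to 0$ as $n \to \infty$ while $\varepsilon$ is fixed. For $C_H$ and $C_L$ the uniformity comes from the estimate $H_1' \precsim \varepsilon^{-5/2} H_1$ (and its analogue for the vertical distance) together with the inequality $(1+\varepsilon)^{-3u_1/2} < \varepsilon^3$ furnished by \eqref{eqn_slow1} and \eqref{eqn_muu0}, all of which involve constants depending only on $a,b,c,d,e$. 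Once uniformity is secured, the lemma is immediate from the choice of $\varepsilon_0$ above.
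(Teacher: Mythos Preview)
Your proposal is correct and takes essentially the same approach as the paper. The paper's own proof is a single sentence---``The following lemma is obtained immediately from \eqref{eqn_W2}, \eqref{eqn_H2} and \eqref{eqn_L2}''---and what you have written is precisely the unpacking of that sentence: extract the constants implicit in the $\succsim$ and $\precsim$ relations and absorb them by shrinking $\varepsilon$. Your added discussion of the uniformity in $n$ of $C_W,C_H,C_L$ is a legitimate point the paper glosses over, and your treatment of it is adequate.
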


If $\mu=1+\varepsilon$ for an $\varepsilon \in (0, \varepsilon_0)$, then we say that $f$ satisfies  
the \emph{small expanding conditions} at $p$.

We repeat the process as above.
Let $B_2'$ be the subset of $\varphi(B_2)$ cobounded by the vertical lines $x=x_-^{(2)}$ and 
$x=x_+^{(2)}$ passing through two of the four vertices of $\varphi(B_2)$ and satisfying  
$[x_-^{(2)}, x_+^{(2)}] \subset \mathrm{Int}(\mathrm{pr}_x(\varphi(B_2)))$.
Let $u_2$ be the positive integer with $(1+\varepsilon)^2 < \mu^{u_2}x_+^{(2)} \leq (1+\varepsilon)^3$ 
and $f^{u_2}(B_2') \subset R_\varepsilon$ for sufficient large $n \in \mathbb{N}$. 
Set $B_3=f^{u_2}(B_2')$.
Let $\delta_3$ be a sub-arc of $f^{u_2}(\varphi(\delta_2))$ such that $\mathrm{pr}_x(\delta_3)$ 
is a small neighborhood of $\mathrm{pr}_x(B_3)$ in $W_{\mathrm{loc}}^u(p)$.
We denote the horizontal width and vertical height of $B_3$ and the vertical distance between $B_3$ and $\delta_3$ 
by $W_3$, $H_3$ and $L_3$ respectively.

The objects $B_{k}'$, $u_{k}$, $B_{k+1}$, $\delta_{k}$, $W_{k+1}$, $H_{k+1}$, $L_{k+1}$ $(k=3,4,5,\dots)$ 
are defined inductively if 
\begin{equation}\label{eqn_prB}
B_j \subset R_\varepsilon
\end{equation}
for $j=1,2,\dots,k$.

The top and bottom sides of the rectangle $B_1$ are horizontal 
and $\gamma_1=B_1 \cap W^u(p)$ consists of three proper arcs in $B_1$.
By Slope Lemma I (Lemma \ref{l_slope}), for $k=2,3,\cdots$, 
the top and bottom sides of the rectangle-like box $B_k$ are almost horizontal 
and $\gamma_k=B_k \cap W^u(p)$ consists of three proper arcs in $B_k$.
See Figure \ref{fig_3on1}.
Thus we have the following lemma.

\begin{lemma}\label{l_slow3'}
Let $\varepsilon_0 >0$ be the constant given in Lemma \ref{l_slow3}.
For any $\varepsilon \in (0, \varepsilon_0]$, there exists the maximum integer $k_0=k_0(\varepsilon,n)$ satisfying \eqref{eqn_prB}.
Moreover, 
\begin{equation}\label{eqn_slow3}
W_{k+1}\geq 10W_k,\quad H_{k+1}\leq 10^{-1}H_k\quad\mbox{and}\quad L_{k+1}\leq 10^{-1}L_k
\end{equation}
hold for any $k=1,2,\dots,k_0$.
\end{lemma}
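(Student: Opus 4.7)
The plan is to prove Lemma \ref{l_slow3'} by induction on $k$, taking Lemma \ref{l_slow3} as the base case and iterating the construction that produced $B_2$ from $B_1$. The key observation is that the argument given in Section \ref{S_Bbox} between $B_1$ and $B_2$ never used anything special about the rectangle $B_1$ beyond two structural facts: (i) $B_1\subset R_\varepsilon$, and (ii) the top and bottom sides of $B_1$ are almost horizontal (horizontal, in fact, for $B_1$). Therefore, if I can show that both features persist at every step as long as $B_{k}\subset R_\varepsilon$, then the estimates \eqref{eqn_W2}, \eqref{eqn_H2}, \eqref{eqn_L2} go through verbatim with indices shifted.

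I would set up the induction as follows. Fix $\varepsilon\in(0,\varepsilon_0]$. Assume inductively that $B_1,\dots,B_k\subset R_\varepsilon$, that each $B_j$ is a rectangle-like box whose top and bottom sides have slope at most $\varepsilon^{5/2}$, and that the inequalities \eqref{eqn_slow3} hold for indices up to $k-1$. To produce $B_{k+1}$, I apply $\varphi$ to $B_k$ and use Slope Lemma I: on the two almost-horizontal sides of $B_k$, tangent vectors have slope $\leq \varepsilon^{5/2}$, so their $\varphi$-images have slope $\leq \varepsilon^{-5/2}$, and after the subsequent $f^{u_k}$ the slope returns to $\leq \varepsilon^{5/2}$. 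This guarantees that $B_{k+1}=f^{u_k}(B_k')$ is again rectangle-like with almost-horizontal top and bottom, preserving the inductive structure. The width estimate reproduces \eqref{eqn_x+x-} and \eqref{eqn_W2}: the horizontal extent of $\varphi(\text{top}),\varphi(\text{bottom})$ is $\succsim \varepsilon^{5/2}W_k$, that of $\varphi(\text{left}),\varphi(\text{right})$ is $\precsim H_k$, and $\lambda^{u_k}\varepsilon^{-5/2}$ can be made arbitrarily small since $u_k\to\infty$. Choosing $\varepsilon_0$ (smaller if necessary) as in Lemma \ref{l_slow3}, the computations \eqref{eqn_H2} and \eqref{eqn_L2} show $H_{k+1}\leq \varepsilon^{1/2}H_k$ and $L_{k+1}\leq \varepsilon^{1/2}L_k$, which yield the factor-$10$ inequalities once $\varepsilon$ is small enough.

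To guarantee the existence of a maximum $k_0$, I note that the width inequality $W_{k+1}\geq 10\,W_k$ forces $W_k\geq 10^{k-1}W_1$, while $R_\varepsilon$ has horizontal diameter $(1+\varepsilon)^3-(1+\varepsilon)=O(\varepsilon)$. So there must be a first index $k_0+1$ for which $B_{k_0+1}\not\subset R_\varepsilon$, and the inductive construction terminates at exactly this stage. All the inequalities in \eqref{eqn_slow3} therefore hold for $k=1,\dots,k_0$.

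The main obstacle is not any single estimate, but ensuring that the almost-horizontal structure of the top and bottom of $B_k$ is genuinely inherited by $B_{k+1}$ at every step, so that Slope Lemma I can be reapplied indefinitely. In particular, I must verify that after the cut producing $B_k'$ and the iterate $f^{u_k}$, both horizontal boundary arcs of $B_{k+1}$ come from the portions of $\partial B_k$ to which the slope bound $\varepsilon^{5/2}$ applies, rather than from the almost-vertical side arcs whose $\varphi$-images have large slopes. Once this geometric bookkeeping is confirmed, the rest of the proof is a straightforward iteration of the Section \ref{S_Bbox} calculations.
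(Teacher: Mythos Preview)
Your approach matches the paper's: the text immediately preceding Lemma~\ref{l_slow3'} is precisely the inductive argument you describe, namely that Slope Lemma~I preserves the almost-horizontal structure of the top and bottom sides of $B_k$, so the estimates \eqref{eqn_W2}--\eqref{eqn_L2} can be repeated verbatim at each step. Your termination argument via $W_k\geq 10^{k-1}W_1$ against the bounded width of $R_\varepsilon$ is also the intended (if unstated) reason for the existence of $k_0$.

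One point needs correction. You justify the width estimate at step $k$ by claiming that $\lambda^{u_k}\varepsilon^{-5/2}$ is small ``since $u_k\to\infty$''. This is false: as $k$ grows, $W_k$ grows, hence $x_+^{(k)}$ grows toward the right edge of $\mathrm{pr}_x(R_\varepsilon)$, and the return time $u_k$ \emph{decreases}. The correct justification for the analogue of \eqref{eqn_x+x-} at step $k$ is that the error term is of order $H_k$, and the ratio $H_k/(\varepsilon^{5/2}W_k)$ is controlled by the inductive hypothesis: it is bounded by $100^{-(k-1)}H_1/(\varepsilon^{5/2}W_1)\sim 100^{-(k-1)}\lambda^{i_n}\varepsilon^{-5/2}$, which is already small at $k=1$ for large $n$ and only improves thereafter. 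Once you replace the $u_k\to\infty$ claim with this observation, the induction closes exactly as you outline.
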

See Figure \ref{fig_k0} for the situation of Lemma \ref{l_slow3'}.
\begin{figure}[hbt]
\centering
\scalebox{0.3}{\includegraphics[clip]{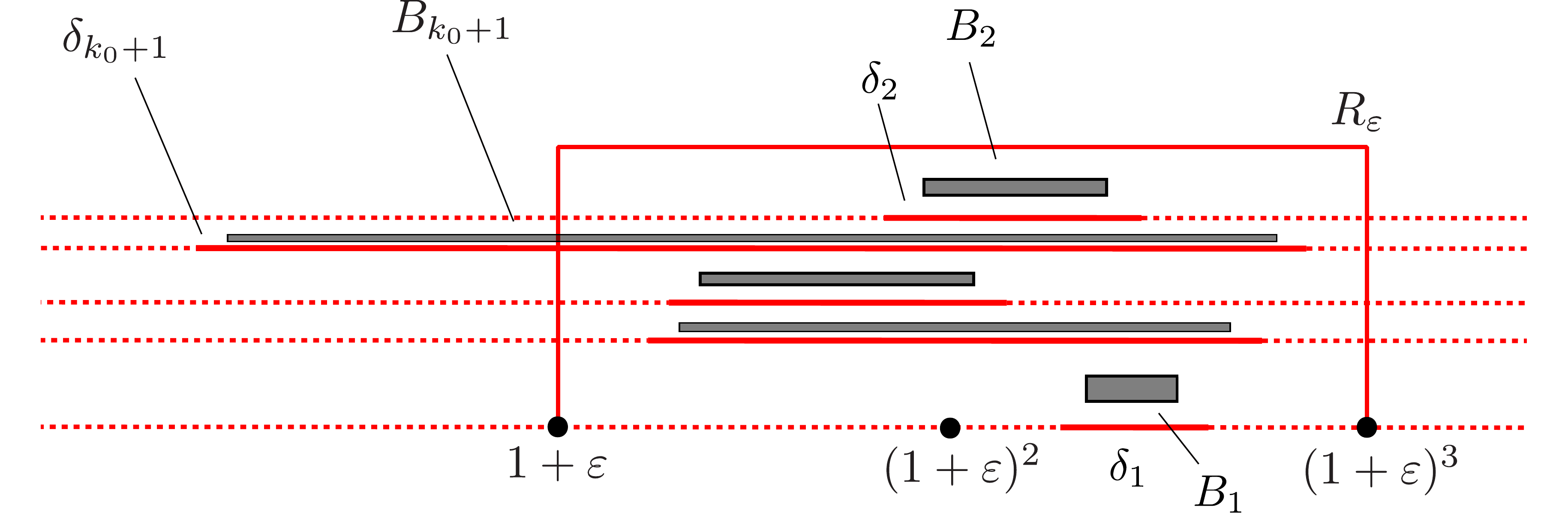}}
\caption{}
\label{fig_k0}
\end{figure}
We note that, since $W_1=W_{1,n}\sim \lambda^{\frac{n}2}$ by \eqref{eqn_W_1}, 
$\lim_{n\to\infty}k_0(\varepsilon,n)=\infty$ for a fixed $\varepsilon$ with $0<\varepsilon\leq \varepsilon_0$.

\section{Intersection Lemma}\label{S_ISL}

Recall that $\mathcal{C}$ is the codimension two submanifold of $\mathrm{Diff}^3(M)$ defined in 
Section \ref{S_pre}.
Let $f_0$, $f_1$ be elements of $\mathcal{C}$ satisfying the conditions \eqref{A1}--\eqref{A3} in 
Theorem \ref{thm_A}.
In particular, $\varepsilon>0$ is taken so that Slope Lemmas I and II 
(Lemmas \ref{l_slope} and \ref{l_slopef1}) hold.
Moreover, we suppose that $f_0$, $f_1$ satisfy the condition \eqref{eqn_adaptable_0}, 
which is one of the adaptable cases given in Section \ref{S_adaptable}.

From now on, we set $f_0=f$ and use the notations in Sections \ref{S_pre}--\ref{S_Bbox}.
Here the subscription `0' is omitted from the notations.
For example, $\lambda_0=\lambda$, $\mu_0=\mu$, $p_0=p$, $q_0=q$ and so on.
We also set $f_1=\overline f$ and represent the notations for $\overline f$ by 
adding bars to the corresponding notations for $f$, e.g.\ 
$\overline \lambda$, $\overline \mu$, $\overline p$, $\overline q$, $\overline m_0$, $\overline S_n$, $\overline B_k$, $\overline W_k$ and so on.

Let $h:M\to M$ be a homeomorphism with $\overline f=h\circ f\circ h^{-1}$.
Here we note that $h(r)$ is not necessarily equal to $\overline r$.
In fact, $h(r)=\overline r$ if and only if $m_0=\overline m_0$ or equivalently $\overline \varphi=h\circ \varphi\circ h^{-1}$.
We may assume that $m_0\leq \overline m_0$ if necessary replacing $f$ and $\overline f$.
Then $h(f^{\overline m_0-m_0}(r))=\overline r$.
Since the constants appeared in \eqref{eqn_varphi} depend on the coordinate on $U(p)$, 
one can not replace the coordinates on $U(p)$ or $U(\overline p)$ 
so as to satisfy $h(r)=\overline r$.

For any $C^1$-arc $\alpha$ in $U(p)$, the union of the end points of $\alpha$ is denoted by $\partial \alpha$.
When any vector tangent to $\alpha$ is not vertical, 
the maximum $\mathrm{Slope}(\alpha)$ of $\mathrm{Slope}(\boldsymbol{v}(\boldsymbol{x}))$ for vectors $\boldsymbol{v}(\boldsymbol{x})$ tangent to $\alpha$ at $\boldsymbol{x} \in \alpha$ is well defined.

If $s>0$ is small enough, then $\gamma_{0,n}'=\beta^u_n(s) \cap S_n$ is equal to $\alpha^u_n \cap S_n$ 
for any sufficiently large $n \in \mathbb{N}$.

The following is a key lemma for the proof of Theorem \ref{thm_A}. 

\begin{lemma}[Intersection Lemma]\label{l_intersection}
Let $\gamma'_{0,n}=\beta^u_n(s) \cap S_n$ and $\overline \gamma'_{0,n}=\overline\beta^u_n(\overline{s}) \cap \overline{S}_n$.
Then there exists an $n_0\in \mathbb{N}$ such that, 
for any $n \geq n_0$, 
\begin{equation}\label{eqn_hfgamma}
h(f^{\overline m_0-m_0}(\gamma_{0,n}')) \cap \overline \gamma_{0,n}' \neq \emptyset.
\end{equation}
\end{lemma}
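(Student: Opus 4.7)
The plan is iterative amplification, following the outline in the introduction. Direct comparison of $h(f^{\overline m_0-m_0}(\gamma'_{0,n}))$ and $\overline\gamma'_{0,n}$ near $\overline r$ is hopeless, because both shrink to a point as $n\to\infty$ while $h$ is only a homeomorphism; instead the idea is to blow up the $f_0$-side arc to macroscopic scale by iterating the dynamics, and then pull back a macroscopic intersection via the conjugacy.

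First I would construct the amplified curves. On the $f_0$-side, starting from $\gamma'_{0,n}$, apply $f^{i_n}$ and then alternately $\varphi$ and $f^{u_j}$ as in Section \ref{S_Bbox}; this yields the rectangle-like boxes $B_{k,n}$ and S-shaped arcs $\widehat\gamma_{k,n}=B_{k,n}\cap W^u(p)=f^{N_k}(\gamma'_{0,n})$ for $k=1,\dots,k_0(\varepsilon,n)$, where $N_k=i_n+\sum_{j=1}^{k-1}(m_0+u_j)$. By Lemma \ref{l_slow3'}, $W_{k,n}$ grows geometrically while $H_{k,n}$ and $L_{k,n}$ shrink geometrically, so at $k=k_0$ the arc $\widehat\gamma_{k_0,n}$ has horizontal diameter bounded below by a constant depending only on $\varepsilon$, while lying in a box of vanishing height. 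Perform the parallel construction on the $\overline f$-side to produce $\overline{\widehat\gamma}_{\overline k,n}=\overline B_{\overline k,n}\cap W^u(\overline p)=\overline f^{\overline N_{\overline k}}(\overline\gamma'_{0,n})$.

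Next, transport via the conjugacy $\overline f\circ h=h\circ f$. For any $k$,
$$h(\widehat\gamma_{k,n})=h(f^{N_k}(\gamma'_{0,n}))=\overline f^{\,N_k-(\overline m_0-m_0)}\bigl(h(f^{\overline m_0-m_0}(\gamma'_{0,n}))\bigr).$$
Hence, if $(k,\overline k)$ can be chosen so that $N_k-(\overline m_0-m_0)=\overline N_{\overline k}$ and $h(\widehat\gamma_{k,n})\cap\overline{\widehat\gamma}_{\overline k,n}\neq\emptyset$, then applying $\overline f^{-\overline N_{\overline k}}$ to the intersection point yields the desired conclusion $h(f^{\overline m_0-m_0}(\gamma'_{0,n}))\cap\overline\gamma'_{0,n}\neq\emptyset$. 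Matching the counts is routine: both $\{N_k\}$ and $\{\overline N_{\overline k}\}$ are increasing sequences going to infinity as $k,\overline k$ vary, and small adjustments of the amplification scheme (for example, inserting or removing one step $f^{u_j}$ on either side) give enough flexibility to hit the required difference for all large $n$.

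The main obstacle is establishing the macroscopic intersection. Because $h$ need not be smooth, $h(\widehat\gamma_{k,n})$ could in principle be a very wiggly arc, and any direct near-straight argument fails. I would argue topologically: uniform continuity of $h^{-1}$ bounds the diameter of $h(\widehat\gamma_{k,n})$ from below; Slope Lemma II (Lemma \ref{l_slopef1}) shows $\overline{\widehat\gamma}_{\overline k,n}$ is almost horizontal and spans $\overline B_{\overline k,n}$; and the conjugacy controls how $\overline f$-iterates of $h(\widehat\gamma_{k,n})$ enter and exit $\overline R_\varepsilon$ in the same pattern as those of $\overline{\widehat\gamma}_{\overline k,n}$. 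Both arcs lie in the one-dimensional manifold $W^u(\overline p)$ embedded in the plane near $\overline q$, and a planar Jordan-curve argument then forces them to cross.
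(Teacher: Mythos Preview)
Your proposal has a genuine gap at the step you call ``routine'': matching the counts so that $N_k-(\overline m_0-m_0)=\overline N_{\overline k}$. The increments $N_k-N_{k-1}=m_0+u_{k-1}$ and $\overline N_{\overline k}-\overline N_{\overline k-1}=\overline m_0+\overline u_{\overline k-1}$ are forced by the dynamics (each $u_j$ is the unique integer with $\mu^{u_j}x_+^{(j)}\in((1+\varepsilon)^2,(1+\varepsilon)^3]$); they are not free parameters. The two sequences come from unrelated maps with unrelated $\mu,\overline\mu$, and there is no reason their ranges should ever coincide modulo the fixed offset; ``inserting or removing one step $f^{u_j}$'' shifts by another forced amount of size roughly $-3\log\varepsilon/\log\mu$, not by~$1$. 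Your macroscopic-intersection step is also not what you think: both $h(\widehat\gamma_{k,n})$ and $\overline{\widehat\gamma}_{\overline k,n}$ are sub-arcs of the injectively immersed line $W^u(\overline p)$, so ``intersection'' means overlap as intervals in the intrinsic parametrisation of that line, not a planar crossing, and no Jordan-curve argument is available.

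The paper avoids both problems by arguing by contradiction and never running a parallel amplification on the $\overline f$-side. Assume $\gamma_{0,n}^{\prime\,*}:=h(f^{\overline m_0-m_0}(\gamma'_{0,n}))$ misses $\overline\gamma'_{0,n}$. Since $\gamma_{0,n}^{\prime\,*}$ is itself a sub-arc of $\overline\beta_n^u(\overline s)$ (it lies in $\overline\varphi(h(\alpha_n^u))$), the disjointness forces $\gamma_{0,n}^{\prime\,*}\subset\overline\beta_n^u(\overline s)\setminus\overline S_n$, which is precisely the hypothesis of Slope Lemma~II for $\overline f$. Hence $\gamma_1^*:=h(\gamma_1)$ is a nearly horizontal arc near $\overline q$, and inductively the smallest arc $\widehat\gamma_k^*\subset W^u(\overline p)$ containing $\gamma_k^*:=h(\gamma_k)$ is nearly horizontal for every $k$. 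But $\gamma_k$ consists of three arcs spanning $B_k$ and stacked within height $H_{k,n}\to0$ as $n\to\infty$; by uniform continuity of $h$, any small ball around a point of $\gamma_k^*$ must contain points of all three components of $\gamma_k^*$, while a small ball meets a nearly-horizontal graph in only one arc. That is the contradiction. The idea you are missing is that the \emph{disjointness hypothesis itself} is what makes Slope Lemma~II applicable on the $\overline f$-side; without setting up the contradiction first you have no slope control on $h(\gamma_k)$ and nothing to compare it to.
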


\begin{proof}
We suppose that, for any $n_0\in \mathbb{N}$, there would exist $n>n_0$ such that 
$$\gamma_{0,n}'^{\,*} \cap \overline\gamma_{0,n}' = \emptyset,$$
where $\gamma_{0,n}'^{\,*}=h \circ f^{\overline{m}_0-m_0}(\gamma_{0,n}')$, 
and introduce a contradiction.

Recall that $i_n\in \mathbb{N}$ satisfies 
$(1+\varepsilon)^2 < f^{i_n}(s_n^+) \leq (1+\varepsilon)^3$ 
and 
$f^{i_n}(S_n) \subset R_{\varepsilon}$.
For short, we set 
$$\gamma_1=\gamma_{1,n}:=f^{i_n}(\gamma_{0,n}') \quad \text{and} \quad 
\gamma_1^*=\gamma_{1,n}^*:=h(\gamma_1).$$
Then $\gamma_1^*=\overline f^{\,i_n-(\overline m_0-m_0)}(\gamma_{0,n}^*)$.
Since $h(q)=\overline q$ and $\overline f=h \circ f \circ h^{-1}$, 
we have 
$h(1)=1, \,\,h(1+\varepsilon)=1+\overline\varepsilon, \,\,h\bigl((1+\varepsilon)^2\bigr)=(1+\overline\varepsilon)^2, \,\,h\bigl((1+\varepsilon)^3\bigr)=(1+\overline\varepsilon)^3$ 
and $1+\overline\varepsilon < \mathrm{pr}_x(\gamma_1^*) \leq (1+\overline\varepsilon)^3$.
Strictly, $\gamma_1^*$ may slightly exceed $R_{\overline\varepsilon}$.
Then we may rearrange our argument so that Lemmas \ref{l_slope} and \ref{l_slopef1} for $\overline f$ still hold 
if $\gamma_1^*$ is contained in a sufficiently small neighborhood of $R_{\overline\varepsilon}$.
Then, by applying Lemma \ref{l_slopef1} to $\overline{f}$, one can show that 
$\gamma_1^*$ is a sub-arc almost parallel to $\overline{\delta}_1 \subset W^u_{\mathrm{loc}}(\overline p)$ 
and $\mathrm{Slope}(\gamma_1^*) < \overline{\varepsilon}^{\frac{5}{2}}$ for any sufficiently large $n$. 

The intersection $\gamma_1'=\varphi(\gamma_1)\cap B_1'$ 
consists of mutually disjoint three arcs connecting the vertical sides of $B_1'$, see Figure \ref{fig_3arcs}.
\begin{figure}[hbt]
\centering
\scalebox{0.3}{\includegraphics[clip]{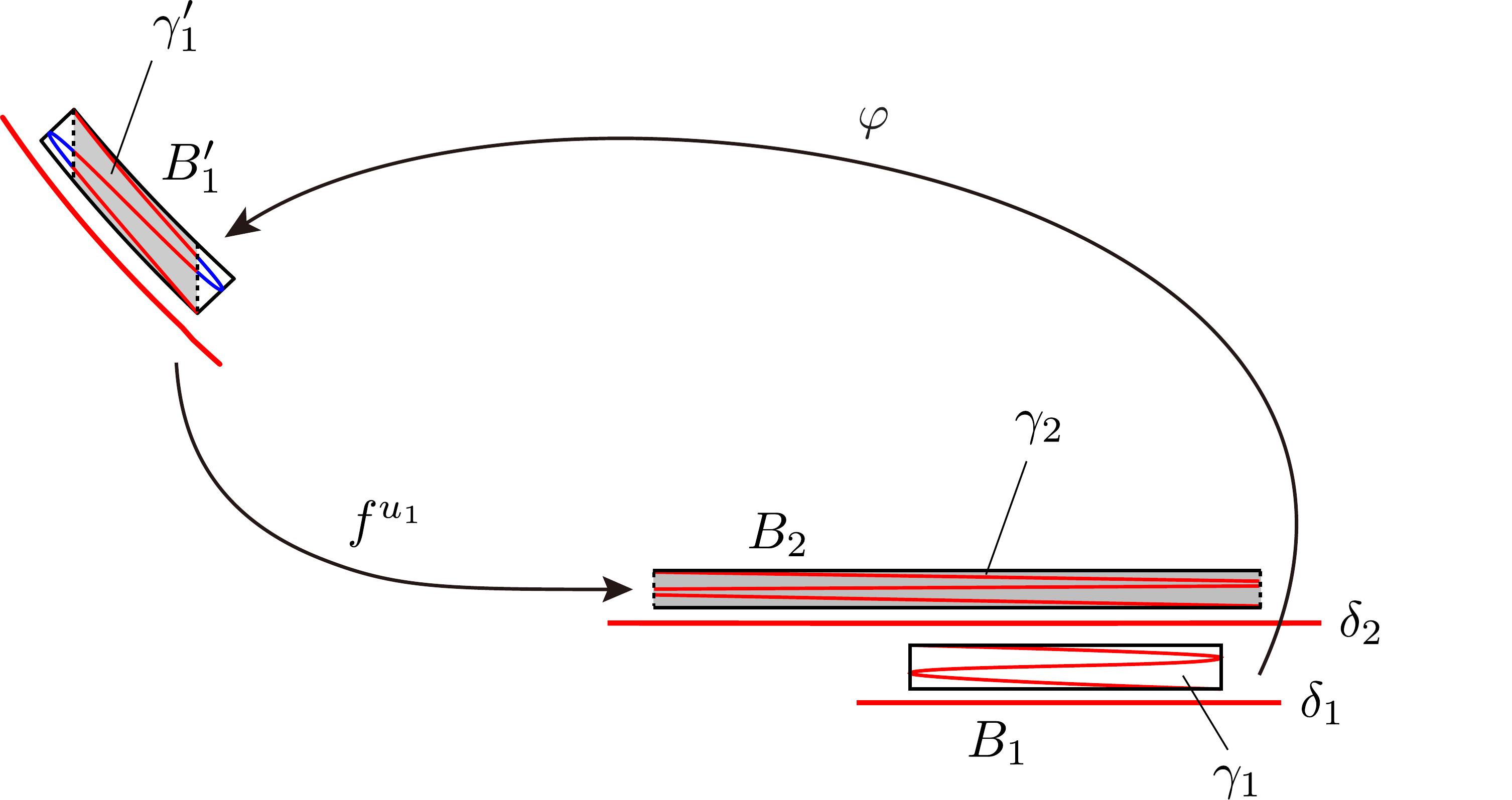}}
\caption{}
\label{fig_3arcs}
\end{figure}
We set 
$\gamma_2=f^{u_1}(\gamma_1')$ and $\gamma_2^*=h(\gamma_2)$.
Note that $\gamma_2$ is a disjoint union of three proper arcs in $B_2$ connecting the vertical sides of $B_2$.
Let $\widehat\gamma_2^*$ be the smallest arc in $W^u(\overline p)$ containing $\gamma_2^*$.
By applying Lemma \ref{l_slopef1} to $\overline{f}$, we have 
$\mathrm{Slope}(\widehat\gamma_2^*) < \overline{\varepsilon}^{\frac{5}{2}}$.
In particular, $\widehat\gamma_2^*$ is almost parallel to $\overline{\delta}_2$.
Repeating the same argument, 
one can have sequences $\{ \gamma_k\}$ satisfying the following conditions.
\begin{itemize}
\setlength{\leftskip}{-18pt}
\item
Each $\gamma_k$ is a disjoint union of three proper arcs in $B_k$ connecting the vertical sides of $B_k$.
\item
For each $\gamma_k^*=h(\gamma_k)$, the smallest arc $\widehat\gamma_k^*$ in $W^u(\overline p)$ containing $\gamma_k^*$ 
is almost parallel to $\overline{\delta}_k.$
\end{itemize}
See Figure \ref{fig_3on1}.
\begin{figure}[hbt]
\centering
\scalebox{0.3}{\includegraphics[clip]{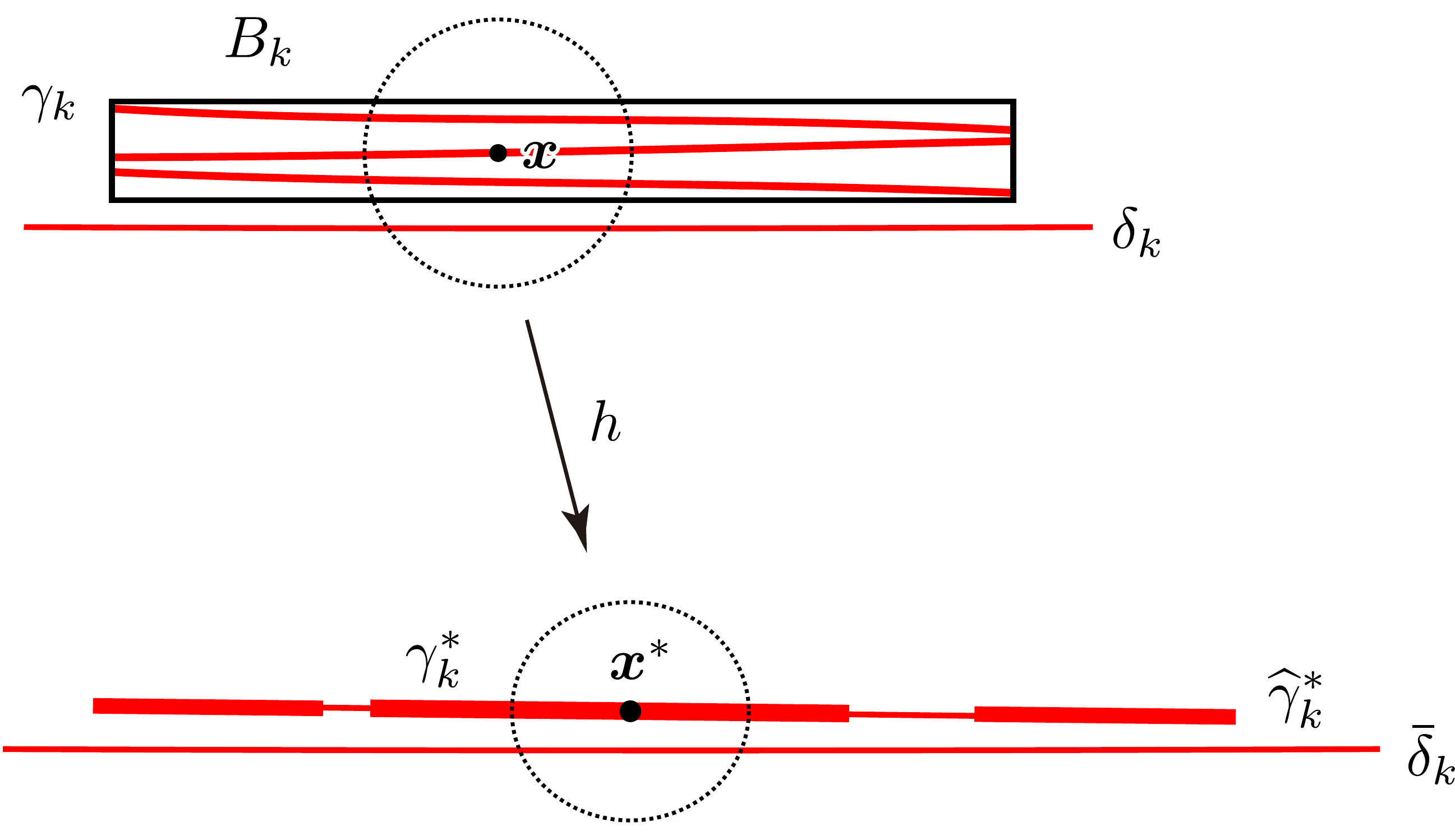}}
\caption{}
\label{fig_3on1}
\end{figure}

Take $\boldsymbol{x} \in \gamma_k$ arbitrarily and set $\boldsymbol{x}^* = h(\boldsymbol{x}) \in \gamma_k^*$.
Since $h$ is uniformly continuous on $R_{\varepsilon}$, 
for any $\overline{l}>0$, there exists $l>0$ independent of $\boldsymbol{x}$ such that 
$h \circ f^{\overline{m}_0-m_0}(N_{l}(\boldsymbol{x})) \subset N_{\overline{l}}(\boldsymbol{x}^*)$, 
where $N_{l}(\boldsymbol{x})$ is the $l$-neighborhood of $\boldsymbol{x}$ 
and $N_{\overline{l}}(\boldsymbol{x}^*)$ is the $\overline{l}$-neighborhood of $\boldsymbol{x}^*$ in $M$.
If $n$ is sufficiently large, then $N_{l}(\boldsymbol{x})$ must intersect the three arcs of $\gamma_k$.
However, $N_{\overline{l}}(\boldsymbol{x}^*)$ intersects only one arc of $\gamma_k^*$.
This gives a contradiction.  
Thus \eqref{eqn_hfgamma} holds for all sufficiently large $n$.
\end{proof}

\section{Proof of Theorem \ref{thm_A}}

Now we are ready to prove Theorem \ref{thm_A}.
The proof is done by using our Intersection Lemma (Lemma \ref{l_intersection}) together with arguments in \cite{pa, dm, po} and so on.
We only consider the case where both $f$ and $\overline f$ satisfy the condition \eqref{eqn_adaptable_0}, 
which belongs to Case $\mathrm{II}_{++}$ in Section \ref{S_adaptable}, and the small expanding 
condition at $p$ and $\overline p$ respectively.
The proof of any other adaptable case is done similarly.

\begin{proof}[Proof of (\ref{M1}) of Theorem \ref{thm_A}]
By Intersection Lemma (Lemma \ref{l_intersection}),
one can take $\overline{r}_n \in \overline{\gamma}_{0,n}' \cap h \circ f^{\overline{m}_0-m_0}(\gamma_{0,n}')$.
Since $\overline{r}_n$ converges to $\overline{r}$ as $n \to \infty$,
$r_n=(h \circ f^{\overline{m}_0-m_0})^{-1}(\overline{r}_n) \in \gamma_{0,n}'$ converges to $r$ as $n \to \infty$. 
See Figure \ref{fig_rn}.
\begin{figure}[hbt]
\centering
\scalebox{0.3}{\includegraphics[clip]{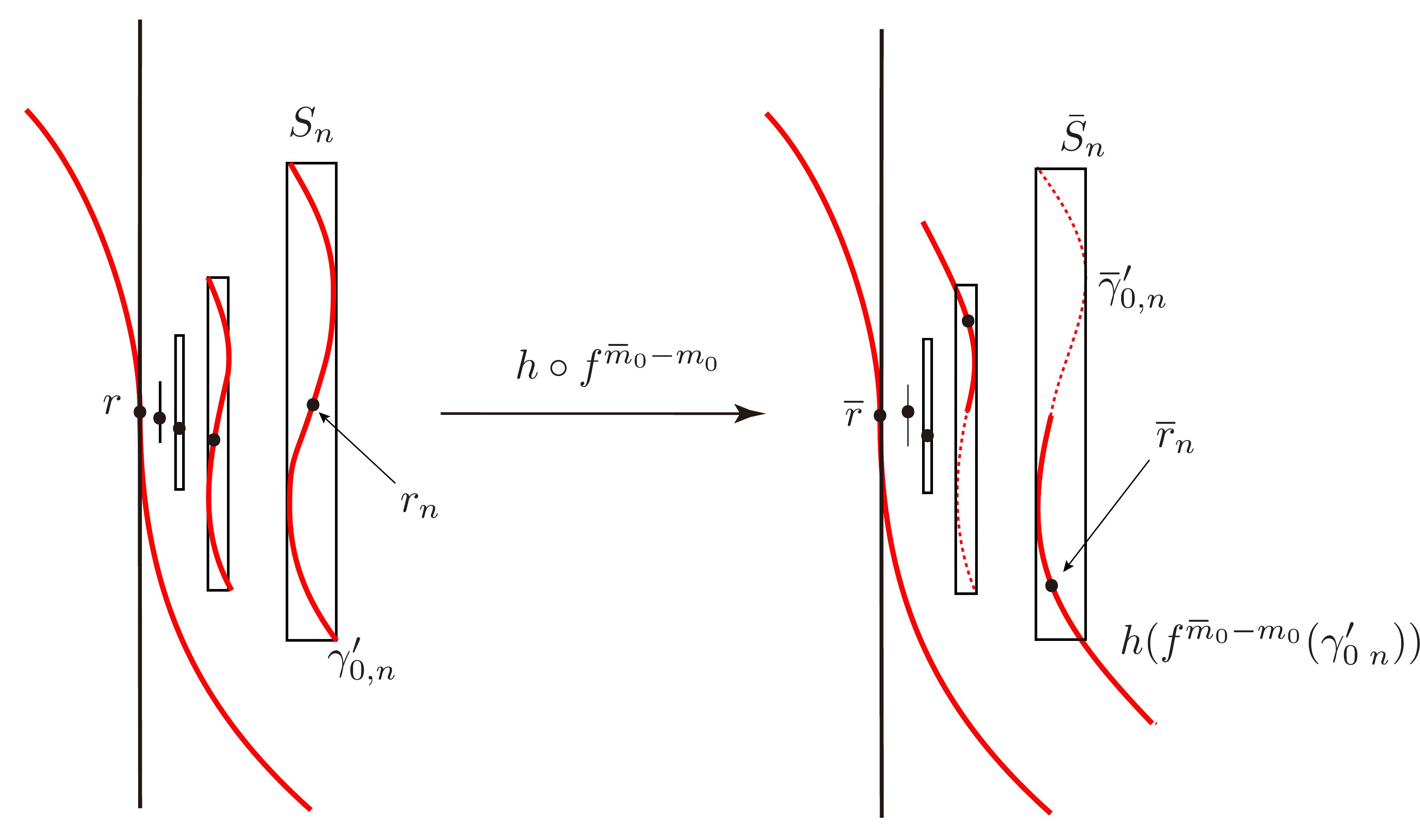}}
\caption{}
\label{fig_rn}
\end{figure}

Let $W_{\mathrm{loc},+}^u(p)$ be the component of $W_{\mathrm{loc}}^u(p)\setminus \{p\}$ containing $q$.
Take a fundamental domain $D$ for $f$ in $W^u_{\mathrm{loc},+}(p)$.
Then there exist subsequences $\{ r_{n(k)} \} \subset \{ r_n \}$, $\{ m(k) \}$ of $\mathbb{N}$ 
and $\boldsymbol{x}_0 \in D$ satisfying the following conditions.
\begin{itemize}
\setlength{\leftskip}{-18pt}
\item
$r_{n(k)}$ converges to $r$ as $k \to \infty$. 
\item
$\boldsymbol{x}_{n(k)}:=f^{m(k)}(r_{n(k)})$ converges to $\boldsymbol{x}_0=(x_0,0)$ as $k \to \infty$. 
\item 
$q_{n(k)}:=\varphi^{-1}(r_{n(k)})$ converges to $q$ as $k \to \infty$.
\end{itemize}
Then
\begin{align*}
 x_0&=\lim_{k\to\infty}\mathrm{pr}_x(\boldsymbol{x}_{n(k)}) = \lim_{k\to\infty}\mathrm{pr}_x(r_{n(k)})\mu^{m(k)}
=\lim_{k\to\infty}az_0\left(\lambda^{n(k)}+O\bigl(\lambda^{\frac{3}{2}n(k)}\bigr)\right)\mu^{m(k)}\\
&= \lim_{k\to\infty}az_0\lambda^{n(k)}\mu^{m(k)}.
\end{align*}
It follows that
$\lim_{k \to \infty} \lambda^{n(k)} \mu^{m(k)} = \dfrac{x_0}{az_0}$.
Then there exist constants $C_0$ and $C_1$ with $0<C_0<C_1$ and such that 
$$C_0<\lambda^{n(k)} \mu^{m(k)}<C_1$$
for any $k$.
Taking the logarithms of this inequalities,  
we have
$$\dfrac{\log C_0}{n(k)\log \mu} <  \dfrac{\log \lambda}{\log \mu} +\dfrac{m(k)}{n(k)} 
< \dfrac{\log C_1}{n(k)\log \mu}.$$
This shows that 
$\displaystyle \lim_{k \to \infty} \dfrac{m(k)}{n(k)} = -\dfrac{\log \lambda}{\log \mu}.$
By applying a similar argument to $\overline f$, one can prove 
$$\displaystyle \lim_{k \to \infty} \bar{\lambda}^{n(k)} \bar{\mu}^{m(k)-(\overline{m}_0-m_0)} 
= \dfrac{h(x_0)}{\bar{a}\bar{z}_0}$$ 
and hence 
$\displaystyle \lim_{k \to \infty} \dfrac{m(k)}{n(k)} = 
\lim_{k \to \infty} \dfrac{m(k)-(\overline{m}_0-m_0)}{n(k)} =-\dfrac{\log \bar{\lambda}}{\log \bar{\mu}}.$
Consequently, 
$\dfrac{\log \lambda}{\log \mu} = \dfrac{\log \bar{\lambda}}{\log \bar{\mu}}$ holds.
\end{proof}

\begin{lemma}\label{l_h_C^1}
If $\dfrac{\log \lambda}{\log \mu}$ is irrational, then the restriction 
$h|_{W_+^u(p)}$ is locally $C^1$-diffeomorphic,
where $W_+^u(p)$ is the component of $W^u(p)\setminus \{p\}$ containing $q$.
\end{lemma}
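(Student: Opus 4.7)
The plan is to extract from the proof of (M1) an explicit formula for $h|_{W^u_{\mathrm{loc},+}(p)}$ on a dense subset of a punctured neighborhood of $p$, then use continuity of $h$ and the conjugacy $\overline f=h\circ f\circ h^{-1}$ to propagate $C^1$-smoothness to all of $W^u_+(p)$.

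First I would revisit the limit argument of (M1). Fix a fundamental domain $D$ for $f|_{W^u_{\mathrm{loc},+}(p)}$ chosen so close to $p$ that $h(D)\subset W^u_{\mathrm{loc},+}(\overline p)$. For any subsequence $(n(k),m(k))$ with $\boldsymbol{x}_{n(k)}=f^{m(k)}(r_{n(k)})\to \boldsymbol{x}_0=(x_0,0)\in D$, the conjugacy gives $\overline f^{\,m(k)-\tau}(\bar r_{n(k)})\to h(\boldsymbol{x}_0)$ with $\tau=\overline m_0-m_0$. Combining this with the leading-order expansions $\mathrm{pr}_x(r_n)=az_0\lambda^n+O(\lambda^{3n/2})$ and $\mathrm{pr}_x(\bar r_n)=\bar a\bar z_0\overline\lambda^n+O(\overline\lambda^{3n/2})$ from Section \ref{S_Sbox} yields
\begin{equation*}
x_0=az_0\lim_{k\to\infty}\lambda^{n(k)}\mu^{m(k)},\qquad h(\boldsymbol{x}_0)_x=\bar a\bar z_0\,\overline\mu^{-\tau}\lim_{k\to\infty}\overline\lambda^{n(k)}\overline\mu^{m(k)}.
\end{equation*}
Set $\sigma:=\log\overline\mu/\log\mu$. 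By (M1) one also has $\sigma=\log\overline\lambda/\log\lambda$, so $\log\overline\lambda=\sigma\log\lambda$ and $\log\overline\mu=\sigma\log\mu$, whence $\overline\lambda^n\overline\mu^m=(\lambda^n\mu^m)^{\sigma}$. Therefore, on every such $\boldsymbol{x}_0$,
\begin{equation*}
h(x_0,0)=\bigl(Cx_0^{\sigma},\,0\bigr),\qquad C:=\bar a\bar z_0\,\overline\mu^{-\tau}(az_0)^{-\sigma}>0.
\end{equation*}

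Next I would establish density. For each $n$ there is a unique $m(n)\in\mathbb{N}$ with $f^{m(n)}(r_n)\in D$, and $\log\mathrm{pr}_x(f^{m(n)}(r_n))=\log(az_0)+n\log\lambda+m(n)\log\mu+o(1)$. Reducing modulo $\log\mu$, this becomes $\log(az_0)-n\rho\log\mu+o(1)\ \bmod\ \log\mu$, where $\rho=-\log\lambda/\log\mu$. Since $\rho$ is irrational by hypothesis, Weyl's equidistribution theorem gives that $\{n\rho\bmod 1\}_{n\in\mathbb N}$ is dense (in fact equidistributed) in $[0,1)$, so the first coordinates of the limit points $\boldsymbol{x}_0$ realisable by subsequences form a dense subset of $\mathrm{pr}_x(D)$. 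By continuity of $h$, the identity $h(x,0)=(Cx^\sigma,0)$ extends from this dense subset to all of $\mathrm{pr}_x(D)$. Since $\sigma>0$ and $\mathrm{pr}_x(D)\subset(0,\infty)$, the map $x\mapsto Cx^\sigma$ is a $C^\infty$-diffeomorphism, so $h|_D$ is in particular a $C^1$-diffeomorphism onto its image.

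To propagate to all of $W^u_+(p)$, given any $y\in W^u_+(p)$ choose $n\in\mathbb{Z}$ with $f^{-n}(y)\in D$; then near $y$ we have $h=\overline f^{\,n}\circ h\circ f^{-n}$, a composition of the $C^3$-diffeomorphisms $f^{-n},\overline f^{\,n}$ with the $C^1$-diffeomorphism $h|_D$, so $h$ is locally $C^1$-diffeomorphic at $y$. The main obstacle I expect is the density step: one must verify that the non-arbitrary points $r_n$ produced by Lemma \ref{l_intersection}, after fundamental-domain reduction by $f^{m(n)}$, still give rise to an equidistributed sequence. This reduces to checking that the $O(\lambda^{3n/2})$ correction in $\mathrm{pr}_x(r_n)$, once amplified by $\mu^{m(n)}=O(\lambda^{-n})$, is only $O(\lambda^{n/2})\to 0$, and hence does not obstruct the Weyl equidistribution of $\{n\rho\bmod 1\}$.
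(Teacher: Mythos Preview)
Your proposal is correct and follows essentially the same route as the paper: derive the power-law formula $h(x)=Cx^{\sigma}$ (with $\sigma=\log\overline\mu/\log\mu$) on the dense set of subsequential limits $x_0$, invoke irrationality of $\log\lambda/\log\mu$ for density via the irrational-rotation argument, extend by continuity, then propagate along $W^u_+(p)$ by the conjugacy. The only cosmetic differences are that the paper parametrises via $s_n$ with $\mathrm{pr}_x(r_n)=\mu^{-s_n}$ rather than via $m(n)$ directly, and appeals only to density of $\{n\rho\bmod 1\}$ rather than full Weyl equidistribution; your explicit check that the $O(\lambda^{3n/2})$ correction survives the amplification as $O(\lambda^{n/2})\to 0$ is exactly the paper's observation that $c_n\to 1$.
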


\begin{proof}
Let $s_n$ be the real number with $\mathrm{pr}_x(r_n)=\mu^{-s_n}$.
Since 
$$\mathrm{pr}_x(r_n) \approx az_0\left(\lambda^{n} + O \bigl(\lambda^{\frac{3}{2}n}\bigr)\right)$$ 
by \eqref{eqn_va} and \eqref{eqn_tn+-},
we have 
$$1=\mathrm{pr}_x(r_n)\mu^{s_n} \approx az_0\left(\lambda^{n}+O\bigl(\lambda^{\frac{3}{2}n}\bigr)\right)\mu^{s_n} \approx az_0\lambda^{n}\mu^{s_n}.$$
Thus $c_n=az_0\lambda^{n}\mu^{s_n}$ satisfies $\lim_{n\to \infty}c_n=1$.
Moreover, 
\begin{equation}\label{eqn_s_n}
s_n=\frac{\log c_n}{\log \mu}-\frac{\log (az_0)}{\log \mu}
-n\frac{\log \lambda}{\log \mu}.
\end{equation}
Since $-\dfrac{\log \lambda}{\log \mu}$ is irrational, 
the set
$$\left\{-\frac{\log (az_0)}{\log \mu}-n\dfrac{\log \lambda}{\log \mu}\mod 1\,;\  n=1,2,\dots \right\}$$
is dense in the interval $[0,1]$.
Since $\lim_{n\to \infty}\log c_n=0$, 
the set $S=\{s_n\mod 1\,;\,n=1,2,\dots\}$ is also dense in $[0,1]$.

Take a point $x_0$ of $[\mu^{-1},1]$ arbitrarily, and let 
$\sigma \in [0,1]$ be the real number with $\mu^{-\sigma}=x_0$.
Since $[\mu^{-1},1]$ is a fundamental domain for $f$ in $W_{\mathrm{loc},+}^u(p)$, 
it follows from the density of $S$ that there exist subsequences $\{n(k)\}$, $\{m(k)\}$ of $\mathbb{N}$ such that 
$\lim_{k\to \infty}(s_{n(k)}-m(k))=\sigma$. 
Then
\begin{equation*}\label{eqn_wsm}
\begin{split}
x_0&=\mu^{-\sigma}=\lim_{k\to \infty}\mu^{-s_{n(k)}+m(k)} =\lim_{k\to \infty} \mathrm{pr}_x(r_{n(k)})\mu^{m(k)}\\
&=\lim_{k\to \infty} az_0 \left( \lambda^{n(k)}+O(\lambda^{\frac{3}{2}n(k)})\right)\mu^{m(k)} =\lim_{k\to \infty} az_0 \lambda^{n(k)}\mu^{m(k)}.
\end{split}
\end{equation*}
Thus we have 
$\displaystyle \lim_{k \to \infty} \lambda^{n(k)} \mu^{m(k)} = \dfrac{x_0}{az_0}.$

Since $\overline{f}$ is conjugate to $f$ via $h$, 
$\mathrm{pr}_x(\overline{r}_{n(k)})\bar{\mu}^{m(k)-(\overline{m}_0-m_0)}$ converges to $h(x_0)$.
As above, we have  
$$\displaystyle \lim_{k \to \infty} \bar{\lambda}^{n(k)} \bar{\mu}^{m(k)-(\overline{m}_0-m_0)} = \dfrac{h(x_0)}{\bar{a}\bar{z}_0}.$$
If we set 
$\tau = \dfrac{\log \bar{\mu}}{\log \mu} =\dfrac{\log \bar{\lambda}}{\log \lambda}$, 
then $\bar{\mu}=\mu^\tau$ and $\bar{\lambda}=\lambda^\tau$. 
It follows that 
$$\dfrac{x^\tau_0}{a^\tau z_0^\tau} = \dfrac{h(x_0)}{\bar{a}\bar{z}_0}\bar\mu^{\overline{m}_0-m_0}.$$
Thus  
$h|_{W^u_{\mathrm{loc},+}(p)}$ is a $C^1$-diffeomorphism 
represented as 
$$h(x) = \dfrac{\bar{a}\bar{z}_0}{a^\tau z^\tau_0\,\bar\mu^{\overline{m}_0-m_0}}x^\tau,$$
where $W^u_{\mathrm{loc},+}(p)$ is the component of $W^u_{\mathrm{loc}}(p)\setminus \{p\}$ containing $q$.
Since $W^u_{+}(p)=\bigcup_{n=0}^\infty f^n(W^u_{\mathrm{loc},+}(p))$ and 
both $f$ and $\overline f$ are $C^3$-diffeomorphisms, 
$h|_{W_+^u(p)}$ is locally $C^1$-diffeomorphic.
This completes the proof.
\end{proof}

\begin{proof}[Proof of (\ref{M2}) of Theorem \ref{thm_A}]
Take a sequence $\{ q_j \}$ on $W^u_{\mathrm{loc},+}(p)$ converging to $q$ and set $t_j = \varphi(q_j)$.
See Figure \ref{fig_x1_2}.
\begin{figure}[hbt]
\centering
\scalebox{0.3}{\includegraphics[clip]{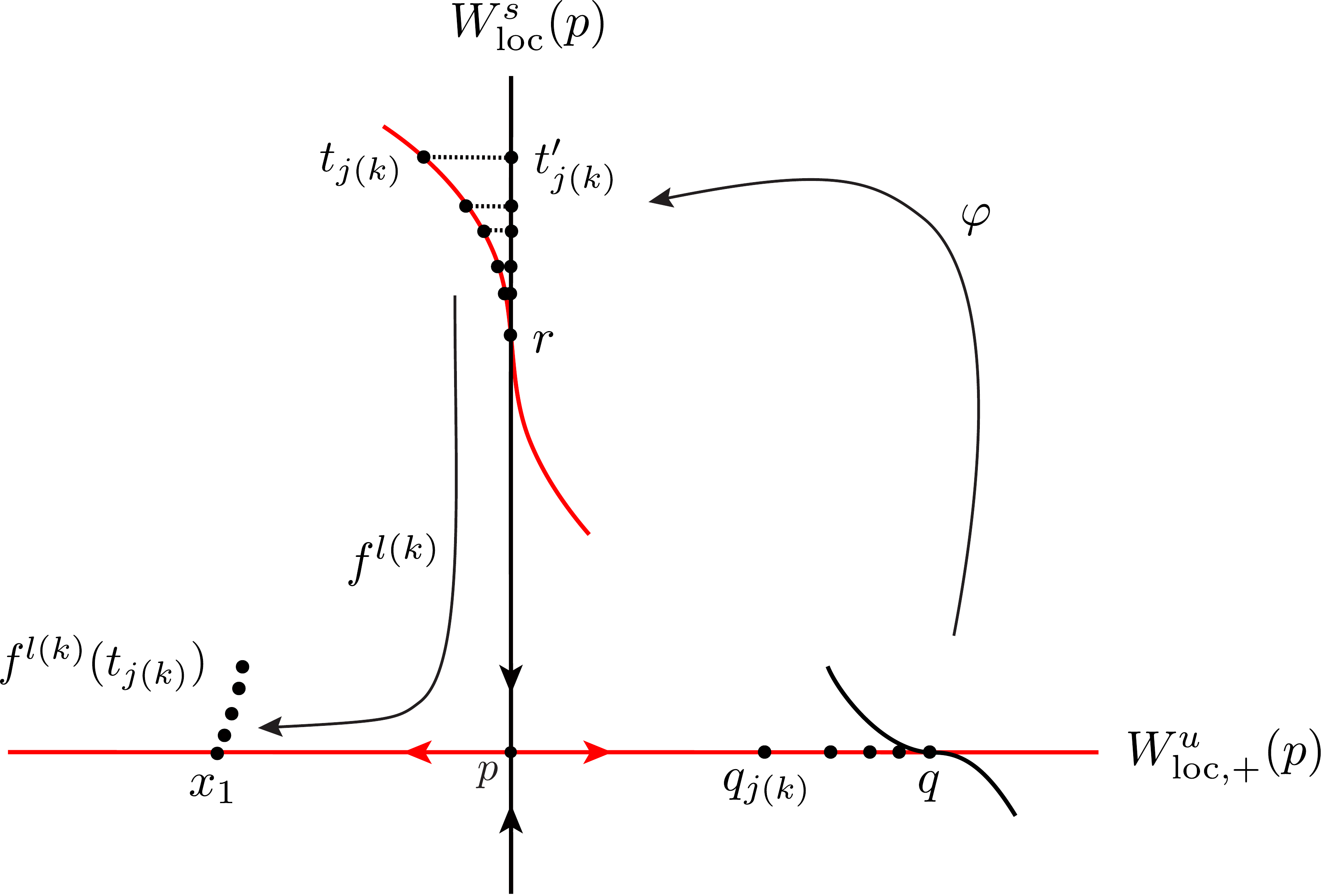}}
\caption{The case of II$_{++}$}
\label{fig_x1_2}
\end{figure}
Let $t^{\prime}_j$ be the image of $t_j$ by the horizontal projection to $W^s_{{\rm loc}}(p)$.
Obviously, both $t_j$ and $t^{\prime}_j$ converge to $r$ as $k \to \infty$.
There exist subsequences $\{ t_{j(k)} \}$ of $\{ t_j \}$, $\{ l(k) \}$ of $\mathbb{N}$ and a point $x_1 $ of $W^u_{{\rm loc}}(p)$ 
with $\displaystyle \lim_{k \to \infty} f^{l(k)}(t_{j(k)}) = x_1$.
Then the following approximations
$$x_1 \sim {\rm pr}_x(t_{j(k)}) \mu^{l(k)} \sim [ d(t^{\prime}_{j(k)},r)]^3 \mu^{l(k)} 
\sim [ d(q_{j(k)},q)]^3 \mu^{l(k)}$$
hold.
It follows that $\mu^{-l(k)} \sim [ d(q_{j(k)},q)]^3$.
Similarly, we have $\bar{\mu}^{-l(k)+(\overline m_0-m_0)} \sim [ d(\bar{q}_{j(k)},\bar{q})]^3$, 
where $\bar{q}_{j(k)}=h(q_{j(k)})$.
Since $h|_{W^u_{+}(p)}$ is locally $C^1$-diffeomorphic by Lemma \ref{l_h_C^1},  
$$d(\bar{q}_{j(k)},\bar{q}) \sim d(q_{j(k)},q).$$
Thus
$$\biggl( \dfrac{\bar{\mu}}{\mu} \biggr)^{-l(k)} \sim 
\biggl( \dfrac{d(\bar{q}_{j(k)},\bar{q})}{d(q_{j(k)},q)} \biggr)^3\bar\mu^{-(\overline m_0-m_0)} 
\sim 1.$$
This implies that $\mu = \bar{\mu}$. 
By \eqref{M1}, we also have $\lambda = \bar{\lambda}$.
This completes the proof of the part \eqref{M2}.
\end{proof}

\begin{remark}\label{r_one_sided}
Some arguments used in the case that the tangency between $W^s(p)$ and $W^u(p)$ is one-sided (for example 
\cite{pa, dm, po}) can not be applicable to the two-sided case.
Here we explain the reason.

Suppose that a homoclinic tangency $q_0$ is one-sided, say a quadratic tangency.
Take an arc $\gamma$ in $U(q_0)$ meeting $W_{\mathrm{loc}}^u(p_0)$ orthogonally at $q_0$.
Let $\{ w_i \}$ be a sequence in $\gamma$ converging to $q_0$ from above.
Then  
\begin{equation}\label{ineq_dist_1}
d(w_i,W^s(p_0)) \approx d(w_i,W^u_{\mathrm{loc}}(p_0))
\end{equation} 
holds.
On the other hand, their images by the conjugacy homeomorphism $h$ satisfy
\begin{equation}\label{ineq_dist_2}
d(h(w_i),W^s(p_1)) \leq d(h(w_i),W^u_{\mathrm{loc}}(p_1)).
\end{equation}
See Figure \ref{fig_technique}\,(a).
\begin{figure}[hbt]
 \centering
 \scalebox{0.5}{\includegraphics[clip]{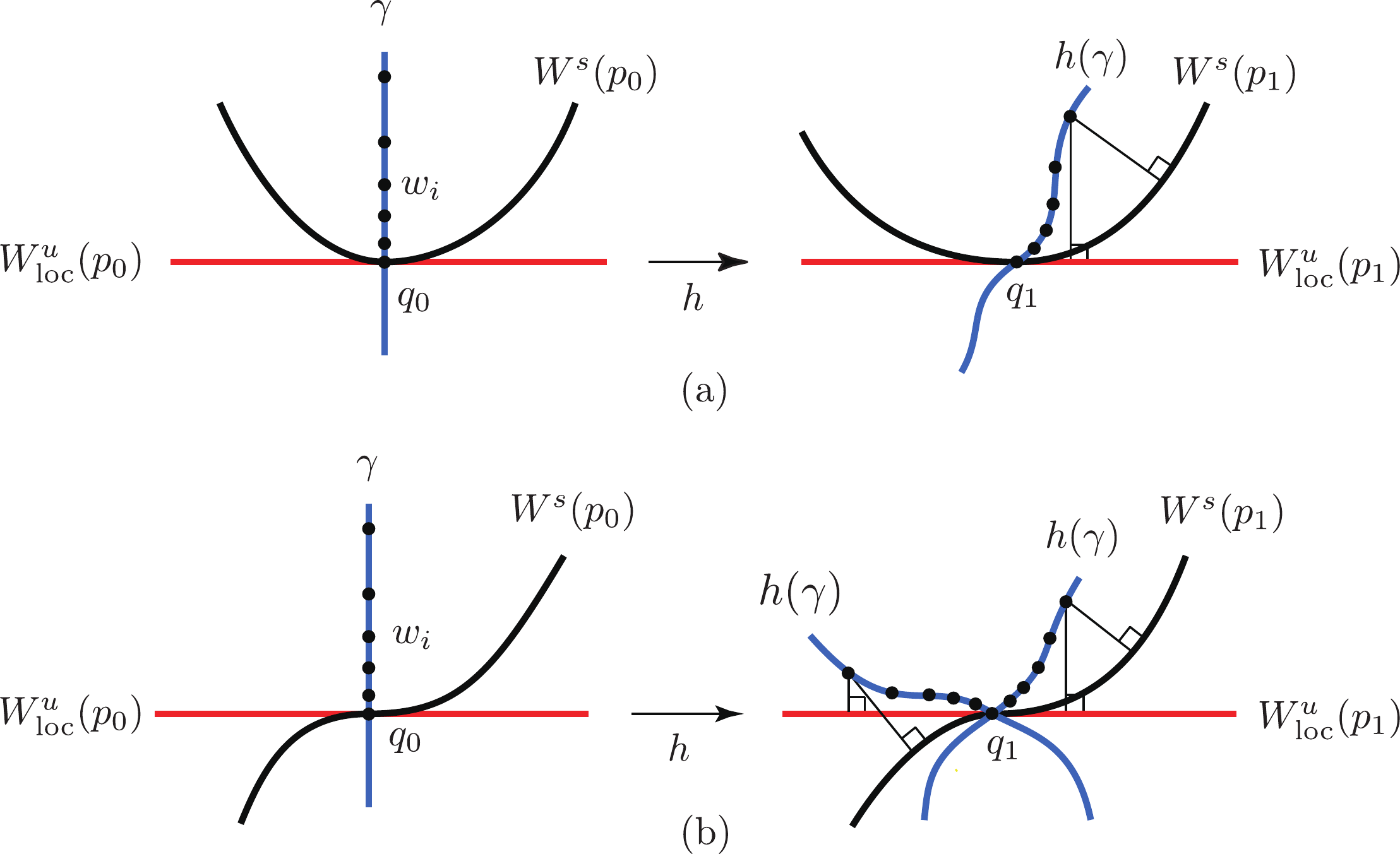}}
 \caption{}
 \label{fig_technique}
\end{figure}
By using \eqref{ineq_dist_1} and \eqref{ineq_dist_2}, one can shown that $\dfrac{\log \lambda_1}{\log \mu_1} \leq \dfrac{\log \lambda_0}{\log \mu_0}$.
By applying the same argument to $h^{-1}$, we also have $\dfrac{\log \lambda_1}{\log \mu_1} \geq \dfrac{\log \lambda_0}{\log \mu_0}$, and hence 
$\dfrac{\log \lambda_1}{\log \mu_1} =\dfrac{\log \lambda_0}{\log \mu_0}$.

Now we consider the case of 2-sided tangencies, say cubic tangencies, and 
$\{ w_i \}$ is a sequence as above.
Then the approximation \eqref{ineq_dist_1} still holds.
However, the inequality \eqref{ineq_dist_2} would not hold as is suggested in Figure \ref{fig_technique}\,(b).
So it might be difficult to get the inequality $\dfrac{\log \lambda_1}{\log \mu_1} \leq \dfrac{\log \lambda_0}{\log \mu_0}$ only by arguments in \cite{pa, dm, po}.
Thus we need another idea in the study of moduli associated with two-sided 
homoclinic tangencies. 
\end{remark}

\section{Adaptable conditions}\label{S_adaptable}
In this section, we will present conditions on the signs of $a$, $bc$, $\lambda$ and $\mu$ 
under which any arguments presented throughout the previous sections are valid.

Recall that we have set
$$U(p)=[-2,2]\times [-2,2],\ W_{\mathrm{loc}}^u(p)=[-2,2]\times \{0\},\ W_{\mathrm{loc}}^s(p)=\{0\}\times [-2,2].$$
The union $W_{\mathrm{loc}}^u(p)\cup W_{\mathrm{loc}}^s(p)$ divides $U(p)$ to four components.
The closures of these components containing $(1,1)$, $(-1,1)$, $(-1,-1)$ and $(1,-1)$ are called 
the first, second, third and fourth quadrants of $U(p)$ and denoted by $Q_1$, $Q_2$, $Q_3$ and $Q_4$, respectively.
In our argument it is required that $\varphi(R_\varepsilon)$ or some substitution is in $Q_1$.
If $R_\varepsilon$ lies in $Q_2$, then we may use 
$$R^-_\varepsilon = [(1+\varepsilon)^{-3}, (1+\varepsilon)^{-1}] \times [0, \varepsilon^3]$$
instead of $R_\varepsilon$.
Then $\varphi(R^-_\varepsilon)$ is in $Q_1$.
See Figure \ref{fig_vpRe}.
\begin{figure}[hbt]
\centering
\scalebox{0.3}{\includegraphics[clip]{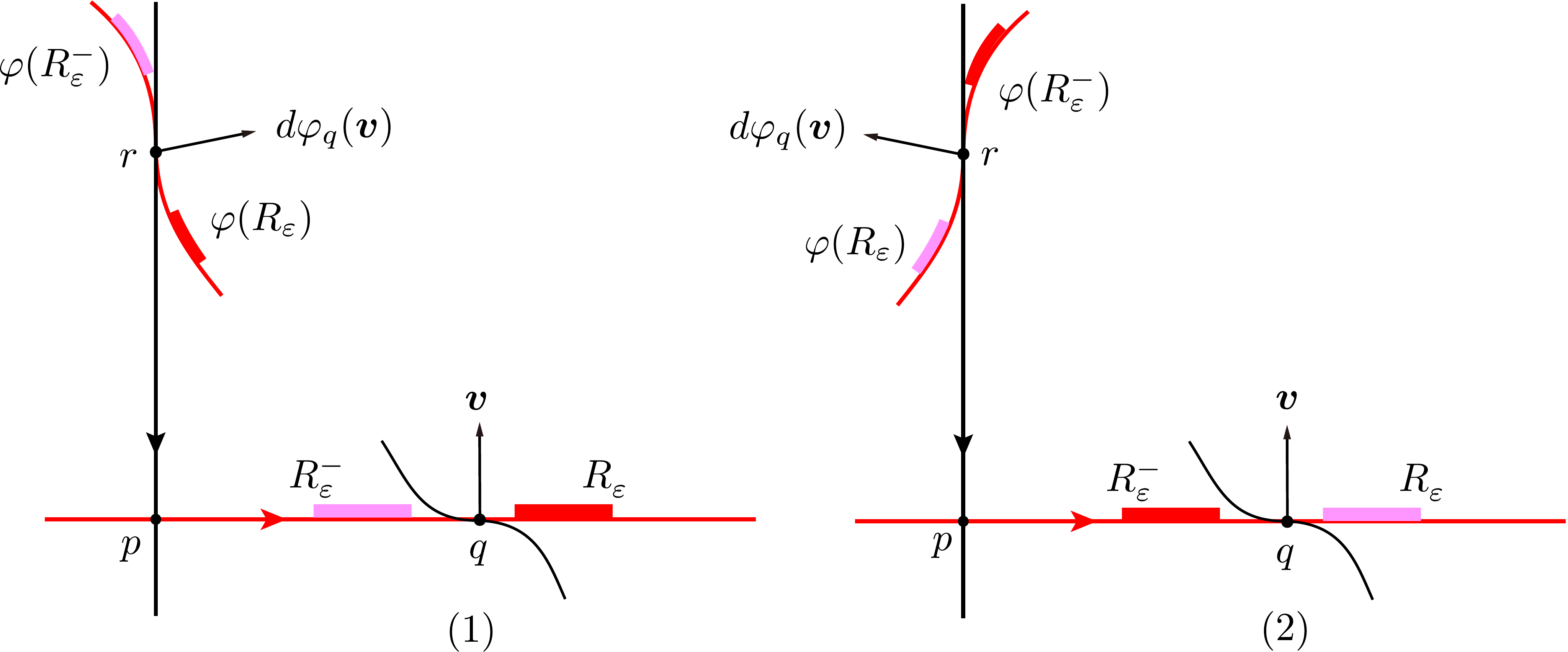}}
\caption{}
\label{fig_vpRe}
\end{figure}
Thus one can arrange the placement of $\varphi(R_\varepsilon)$ suitably under any conditions on the signs of $a$, $bc$, $\lambda$ and $\mu$.

\begin{definition}[Adaptable condition]\label{def_adaptable}
$f$ satisfies the \emph{adaptable condition} with respect to $(p,q)$ if, 
for all sufficiently large positive integers $n$ (or positive even or odd integers), 
there exists a rectangle $S_n$ defined as in Section \ref{S_Sbox} and 
either $S_n$ or its image $f(S_n)$ lies in $Q_1$. 
\end{definition}

As was seen in Section \ref{S_Sbox}, $S_n$ exists if and only if 
there exists $t_n$ satisfying the condition
\begin{equation}\tag{\ref{eqn_3ctn}'}\label{eqn_3ctn'}
3ct_n^2\approx -b\lambda^nz_0
\end{equation}
which corresponding to \eqref{eqn_3ctn}.
Here $z_0$ is the positive constant as illustrated in Figure \ref{fig_alpha_n}.

Now we will see that the existence of $S_n$ and the placements of $S_n$ and $f(S_n)$ are strictly determined by the signs of $a$, $bc$, $\lambda$ and $\mu$, 
which are classified to the sixteen cases as in Table \ref{table_1}
\begin{table}[hbt]
\centering  
{\renewcommand{\arraystretch}{1.2}
\begin{tabular}{|c|c|c||c|c|c|c|}
\hline
\multicolumn{3}{|c||}{Case}& $a$ & $bc$ &$\lambda$&$\mu$\\
\hline
& I$_+$&  I$_{++}$ &  &  & $+$& $+$\\
\cline{3-3}\cline{7-7}
\quad I\quad{} & &  I$_{+-}$ & $+$ & $+$ & & $-$\\
\cline{2-3}\cline{6-7}
& I$_-$&  I$_{-+}$ &  &  & $-$& $+$\\
\cline{3-3}\cline{7-7}
& &  I$_{--}$ &  &  & & $-$\\
\hline
& II$_+$&  II$_{++}$ &  &  & $+$& $+$\\
\cline{3-3}\cline{7-7}
II& &  II$_{+-}$ & $+$ & $-$ & & $-$\\
\cline{2-3}\cline{6-7}
& II$_-$&  II$_{-+}$ &  &  & $-$& $+$\\
\cline{3-3}\cline{7-7}
& &  II$_{--}$ &  &  & & $-$\\
\hline
& III$_+$&  III$_{++}$ &  &  & $+$& $+$\\
\cline{3-3}\cline{7-7}
III& &  III$_{+-}$ & $-$ & $+$ & & $-$\\
\cline{2-3}\cline{6-7}
& III$_-$&  III$_{-+}$ &  &  & $-$& $+$\\
\cline{3-3}\cline{7-7}
& &  III$_{--}$ &  &  & & $-$\\
\hline
& IV$_+$&  IV$_{++}$ &  &  & $+$& $+$\\
\cline{3-3}\cline{7-7}
IV& &  IV$_{+-}$ & $-$ & $-$ & & $-$\\
\cline{2-3}\cline{6-7}
& IV$_-$&  IV$_{-+}$ &  &  & $-$& $+$\\
\cline{3-3}\cline{7-7}
& &  IV$_{--}$ &  &  & & $-$\\
\hline
\end{tabular}
}
\bigskip 
\caption{}
\label{table_1}
\end{table}

First we suppose that $\lambda>0$.
Then there exists $t_n$ satisfying \eqref{eqn_3ctn'} if and only if 
$bc<0$.
Moreover, if $a>0$, then $S_n$ is in $Q_1$, which belongs to Case II$_+$. 
See Figure \ref{fig_Cases}\,(1).
If $a<0$, then $S_n$ is in $Q_2$.
Hence $f(S_n)$ is $Q_1$ if $\mu<0$, which is in Case IV$_{+-}$.
See Figure \ref{fig_Cases}\,(2).
\begin{figure}[hbt]
\centering
\scalebox{0.5}{\includegraphics[clip]{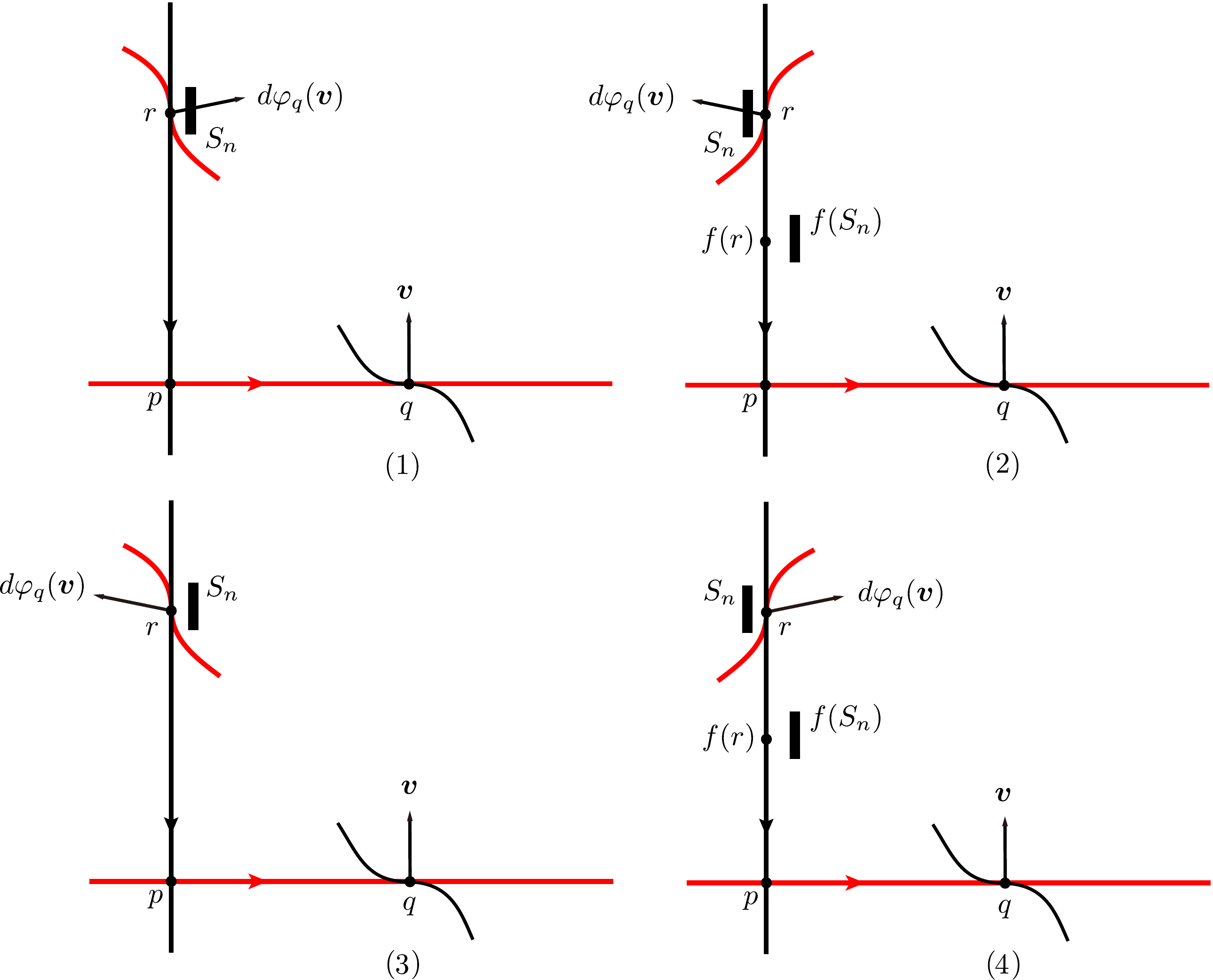}}
\caption{}
\label{fig_Cases}
\end{figure}

Next we suppose that $\lambda<0$.
Then there exists $t_n$ satisfying \eqref{eqn_3ctn'} if and only if either 
(i) $bc<0$ and $n$ is even or (ii) $bc>0$ and $n$ is odd.
In the case (i), $S_n$ is in $Q_1$ if $a>0$, which belongs to Case II$_-$.
See Figure \ref{fig_Cases}\,(1).
If $a<0$ and $\mu<0$, then $f(S_n)$ is in $Q_1$, which belongs to Case IV$_{--}$.
See Figure \ref{fig_Cases}\,(2).
On the other hand, in the case (ii), $S_n$ is in $Q_1$ if $a<0$, which belongs to Case III$_-$.
See Figure \ref{fig_Cases}\,(3).
If $a>0$ and $\mu<0$, then $f(S_n)$ is in $Q_1$, which belongs to Case I$_{--}$.
See Figure \ref{fig_Cases}\,(4).

Thus we have the following preposition.

\begin{proposition}\label{p_adaptable}
If one of Cases {\rm I$_{--}$, II, III$_{-}$, IV$_{--}$} and {\rm IV$_{+-}$} holds, 
then $f$ satisfies the adaptable condition with respect to $(p,q)$. 
\end{proposition}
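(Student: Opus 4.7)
The plan is to carry out the case analysis sketched in the paragraphs immediately preceding the proposition, making explicit the three independent sign checks that determine whether the adaptable condition holds.

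First, I would determine when a turning-point parameter $t_n$ satisfying \eqref{eqn_3ctn'} exists. Since $z_0>0$, the right-hand side of \eqref{eqn_3ctn'} is positive iff $bc\lambda^n<0$; hence for $\lambda>0$, existence holds for every large $n$ iff $bc<0$, while for $\lambda<0$ it holds for all large even $n$ iff $bc<0$ and for all large odd $n$ iff $bc>0$. This immediately discards Cases I$_{+}$ and III$_{+}$, where no suitable $t_n$ exists, and it fixes the parity class of $n$ on which one must work in the remaining $\lambda<0$ cases. Definition \ref{def_adaptable} is compatible with such a parity restriction.

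Second, I would identify which quadrant contains $S_n$. By the estimate \eqref{eqn_SD}, the horizontal distance $D_n$ of $S_n$ from $W^s_{\mathrm{loc}}(p)$ is dominated by $az_0\lambda^n$, while the vertical projection of $S_n$ hovers just below $1$; thus $S_n$ always lies in the upper half of $U(p)$. Consequently $S_n\subset Q_1$ exactly when $a\lambda^n>0$, and $S_n\subset Q_2$ when $a\lambda^n<0$. Running through the six remaining sign patterns places $S_n$ in $Q_1$ throughout all four sub-cases of Case II and in the two sub-cases III$_{-+}$, III$_{--}$, and places $S_n$ in $Q_2$ in Cases I$_{-}$, IV$_{+}$, IV$_{-}$.

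Third, when $S_n\subset Q_2$ one uses the linear action of $f$ on $U(p)$: a point with negative $x$-coordinate and positive $y$-coordinate is mapped into $Q_1$ iff $\mu<0$ (so that $\mu x>0$ and $\lambda y$ still has the same sign as $y$, modulo the effect of $\lambda$; because $y$ is close to $1$, shrinking by $\lambda$ keeps $f(S_n)$ in the upper half whenever $\lambda>0$, and the analogous check for $\lambda<0$ pairs with the parity restriction from Step 1). This selects precisely the sub-cases I$_{--}$, IV$_{+-}$, and IV$_{--}$. Combining all three steps produces exactly the list I$_{--}$, II, III$_{-}$, IV$_{--}$, IV$_{+-}$ asserted by the proposition.

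The proof is bookkeeping rather than computation, and the only subtle point is that, when $\lambda<0$, the sequence $\{S_n\}$ is only defined along a parity sub-sequence of $n$; however, the construction in Section \ref{S_Sbox} and all subsequent estimates use each arc $\alpha_n^u$ individually rather than comparing consecutive indices, so restricting to even or odd $n$ is harmless. The main thing to verify carefully is therefore that the dominance $D_n\sim az_0\lambda^n$ in \eqref{eqn_SD} retains its sign across every sign combination, and that no hidden cancellation occurs in the width and height estimates \eqref{eqn_SW}, \eqref{eqn_SH}; those estimates were derived using only $|b|,|c|\neq 0$ and the parity assumption on $n$, so they survive the sign flips unchanged.
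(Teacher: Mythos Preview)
Your three-step sign analysis is exactly the paper's argument, just reorganized: the paper splits first on $\mathrm{sgn}\,\lambda$ and then checks existence of $t_n$ and the quadrant of $S_n$ case by case, while you run the three checks (sign of $bc\lambda^n$ for existence of $t_n$, sign of $a\lambda^n$ for the horizontal position of $S_n$ via \eqref{eqn_SD}, sign of $\mu$ for the location of $f(S_n)$) in sequence; both routes land on the same nine sub-cases.

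One small caution on Step~3: your parenthetical that for $\lambda<0$ the vertical sign ``pairs with the parity restriction from Step~1'' is not actually an argument --- a single application of $f$ multiplies the $y$-coordinate (which is close to $+1$) by $\lambda$ once, irrespective of the parity of $n$, so in Cases I$_{--}$ and IV$_{--}$ one literally gets $f(S_n)\subset Q_4$ rather than $Q_1$. The paper, however, handles this point with the bare assertion ``$f(S_n)$ is in $Q_1$'' and the reference to Figure~\ref{fig_Cases}, so your write-up is no less complete than the original on this score; just be aware that the hand-wave you flagged is real and is inherited from the paper rather than resolved by the parity remark.
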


It follows from the proposition that $f$ satisfies the adaptable condition in nine of the sixteen cases 
in Table \ref{table_1}.


\end{document}